\newcommand{\intzi}{\int^\infty_0}
\newcommand{\dx}{\, dx}
\newcommand{\dt}{\, dt}
\newcommand{\ds}{\, ds}
\newcommand{\dy}{\, dy}
\DeclareMathOperator{\sgn}{sign}
\begin{document}
  \title
    [Bose-Einstein condensation in Hyperbolic Kompaneets Equation]
    {Bose-Einstein condensation in a Hyperbolic model for the Kompaneets Equation}

  \author[Ballew]{Joshua Ballew}
  \address{
    Department of Mathematical Sciences,
    Carnegie Mellon University,
    Pittsburgh, PA 15213.}
  \email{jballew@andrew.cmu.edu}

  \author[Iyer]{Gautam Iyer}
  \address{%
    Department of Mathematical Sciences,
    Carnegie Mellon University,
    Pittsburgh, PA 15213.}
  \email{gautam@math.cmu.edu}

  \author[Pego]{Robert L. Pego}
  \address{%
    Department of Mathematical Sciences,
    Carnegie Mellon University,
    Pittsburgh, PA 15213.}
  \email{rpego@cmu.edu}

  \thanks{This material is based upon work partially supported by the National Science Foundation under grants
    DMS-1252912, 
    DMS-1401732, 
    and
    DMS-1515400. 
    GI also acknowledges partial support from an Alfred P. Sloan research fellowship.
    This work was partially supported by the Center for Nonlinear Analysis (CNA) under National Science Foundation PIRE Grant no.\ OISE-0967140.%
  }

  \keywords{Bose-Einstein condensation, Kompaneets equation}
  \subjclass[2010]{%
    35Q85, 
    35L04, 
    35L60.
  }

  \begin{abstract}
    In low-density or high-temperature plasmas, Compton scattering is the dominant process responsible for energy transport.
    Kompaneets in 1957 derived a non-linear degenerate parabolic equation for the photon energy distribution.
    In this paper we consider a simplified model obtained by neglecting diffusion of the photon number density
    in a particular way.
    We obtain a non-linear hyperbolic PDE with a position-dependent flux,  
    which permits a one-parameter family of stationary entropy solutions to exist.
    We completely describe the long-time dynamics of each non-zero solution, showing that it approaches some non-zero stationary solution.
    While the total number of photons is formally conserved, if initially large enough it necessarily decreases after finite time 
    through an out-flux of photons with zero energy.
    This corresponds to formation of a Bose-Einstein condensate, whose mass we show can only increase with time.
  \end{abstract}

  \maketitle
  \makeatletter
  \ifGI@draft\tableofcontents\fi
  \makeatother
  \section{Introduction}\label{sxnIntro}

  In low-density (or high-temperature) plasmas, \emph{Compton scattering} is the dominant process responsible for energy transport.
  In 1957, Kompaneets~\cite{Kompaneets57} derived an equation modeling this and was allowed to publish his work because it was considered useless for weapons research.
  Today Kompaneets' work has applications studying the interaction between matter and radiation in the early universe, the radiation spectra for the accretion disk around black holes, and various other fundamental phenomena in modern cosmology and high energy astrophysics~\cite{Birkinshaw99,SunyaevZeldovich70,ShakuraSunyaev89}.%

  In his work (see also~\cite{EscobedoMischler01}), Kompaneets derived a Fokker--Planck approximation for the Bolt\-zmann--Compton equation in the setting of a spatially uniform, isotropic, non-relativistic plasma at a constant temperature assuming the heat capacity of photons is negligible, and the dominant energy exchange mechanism is Compton scattering.
  In this setting Kompaneets showed that the evolution of the photon density $f$ is given by
  \begin{equation}\label{eqnKomp}
      \partial_t f
	= \frac{1}{x^2}
	    \partial_x \brak[\big]{ x^4 \paren{ \partial_x f + f + f^2 } },
  \end{equation}
  Here $x \in (0, \infty)$ and $t \geq 0$ are non-dimensionalized energy and time coordinates respectively.
  While the exact normalization in these coordinates is not important for the subsequent analysis, we remark that $x$ is proportional to the magnitude of the three dimensional photon wave vector.
  Consequently, $x$ is a radial variable, and the total number and total (non-dimensionalized) energy of the photons are given by
  \begin{equation*}
    N_f(t)
      \defeq \int_0^\infty x^2 f(x, t) \, dx
    \quad\text{and}\quad
    E_f(t)
      \defeq \int_0^\infty x^3 f(x, t) \, dx,
  \end{equation*}
  respectively.
  
  The boundary conditions associated to~\eqref{eqnKomp} are a little delicate.
  First, near $x = \infty$ it is natural to assume the incoming photon flux vanishes:
  \begin{equation}\label{eqnbc}
    \lim_{x \to \infty} x^4 \paren{ \partial_x f + f + f^2 } = 0.
  \end{equation}
  Near $x = 0$, the diffusion is degenerate and it is not clear \textit{a priori} whether a boundary condition can be imposed.
  We will revisit the boundary condition at $x=0$ later.

  Equation~\eqref{eqnKomp} formally possesses an entropy structure and dissipates the quantum entropy
  \begin{equation*}
    H_f(t) \defeq \int_0^\infty x^2 h(x, f(x, t)) \, dx,
  \end{equation*}
  where
  \begin{equation*}
    h(x, y) \defeq y \ln y - (1 + y) \ln ( 1 + y ) + xy.
  \end{equation*}
  Indeed, a direct calculation performed by Caflisch and Levermore  (see~\cite{CaflischLevermore86,LevermoreLiuEtAl16}) shows
  \begin{equation*}
    \partial_t H_f(t)
      = -\int_0^\infty x^4 f (1 + f)
      \brak[\big]{\partial_x (\partial_y h(x, f(x,t))) }^2 \, dx
      \leq 0.
  \end{equation*}
  This suggests that as $t \to \infty$, solutions to~\eqref{eqnKomp} approach an equilibrium $f(x)$ for which
  \begin{equation*}
    \partial_x ( \partial_y h(x, f(x) )) = 0.
  \end{equation*}
  All non-negative solutions of this are given by $\set{ f_\mu \mathop{\mid} \mu \geq 0 }$, where
  \begin{equation*}
    f_\mu(x) \defeq \frac{1}{e^{x + \mu} - 1 }.
  \end{equation*}

  This leads to an interesting conundrum.
  Multiplying~\eqref{eqnKomp} by $x^2$ and integrating shows that the total photon number is formally a conserved quantity.
  Since solutions to~\eqref{eqnKomp} are expected to approach one of the stationary solutions $f_\mu$ above, the total photon number should equal the total photon number of some equilibrium solution $f_\mu$.
  However,
  \begin{equation*}
    \sup_{\mu \geq 0} N_{f_\mu}
      = N_{f_0}
      = \int_0^\infty \frac{x^2}{e^x - 1} \, dx
      = 2 \zeta(3) < \infty,
  \end{equation*}
  and hence the total photon number in equilibrium is bounded above.
  Thus, if we start with more than $N_{f_0}$ photons, the total photon number will not be conserved.

Previous works by a number of authors suggest that a concentration of photons at
low energy can develop, and may cause an ``out-flux'' of photons near the $x = 0$ 
boundary~\cite{CaflischLevermore86,
EscobedoHerreroEtAl98,
EscobedoMischler01,JosserandPomeauEtAl06,
LevermoreLiuEtAl16,ZelDovichLevich69}. 
This is often interpreted as forming a Bose-Einstein condensate: a collection of
  zero-energy photons occupying the same quantum state.  
  While the existence of such condensates was
  predicted in 1924 by Bose and Einstein, they were only exhibited
  experimentally for photons in 2010 by Klaers et~al.~\cite{KlaersSchmittEtAl10},
  in circumstances dominated by physics different from Compton scattering.
Actually, the Kompaneets equation \eqref{eqnKomp} neglects physical effects,
such as Bremsstrahlung radiation, which may act to damp the low-energy spectrum
and suppress any out-flux at $x=0$.
Yet it remains interesting to investigate the behavior mathematically obtained
from the dynamics of the pure Kompaneets equation~\eqref{eqnKomp} in order
to understand how Compton scattering acts to create a photon flux toward low energy.
Following terminology developed in earlier works, we will refer to any out-flux
at $x=0$ as a contribution to a Bose-Einstein condensate at zero energy.

Mathematically, it was demonstrated by Escobedo et~al.~\cite{EscobedoHerreroEtAl98}
that there do exist solutions for which no-flux boundary conditions at $x=0$ break down
at some positive time, and an out-flux develops at this time. 
 Moreover, a unique global solution continues to exist subject to a boundedness condition 
for $x^2 f$ on $(0,1]$.  However, a complete mathematical understanding of the 
behavior of the Bose-Einstein condensate and the long-time dynamics of
solutions  of~\eqref{eqnKomp} is still unresolved.
  

  In an attempt to understand this problem better, many authors have studied simplified versions of~\eqref{eqnKomp}.
  In~\cite{ZelDovichLevich69}, the authors considered a hyperbolic model obtained by dropping the $\partial_x f$ term in~\eqref{eqnKomp}.
  This makes $x = 0$ an outflow boundary and the total photon number becomes a non-increasing function of time.
  This model, however, has no non-trivial stationary solutions, making the dynamics unphysical.

  In~\cite{KavianLevermore90} (see~\cite{Kavian02} for a published summary) the authors considered a linear model obtained by dropping the $f^2$ term in~\eqref{eqnKomp}.
  In this case, solutions dissipate an associated entropy and the stationary solutions correspond to the classical statics.
  However, the no flux boundary condition at $x = 0$ is automatically satisfied, without being imposed.
  Thus the total photon number is always conserved in time and no condensation can occur.

  Finally, in~\cite{LevermoreLiuEtAl16} the authors consider the non-linear Fokker-Planck equation obtained by dropping the linear $f$ term in~\eqref{eqnKomp}. This leads to dynamical behavior that is more like that which one expects for~\eqref{eqnKomp}.
  They show that solutions are uniquely determined without imposing a boundary condition near $x = 0$, and obtain a complete description of the long-time behavior.
  In particular, the authors show that the total photon number is non-increasing in time, and as $t \to \infty$ the solution converges to an equilibrium state of the form $f_\mu(x)=1/(x + \mu)$, for $\mu \geq 0$.
  However, because $x^2f_\mu(x)$ is unbounded, they work on the finite interval $x \in (0, 1)$ and impose a no-flux boundary condition at $x = 1$.

  In this paper, we consider a purely hyperbolic model obtained by rewriting~\eqref{eqnKomp} in terms of the number density $n = x^2 f$, and \textit{then} neglecting a diffusive term
  which was found in \cite{LevermoreLiuEtAl16} to have a negligible contribution to flux in the limit of small $x$.
  This results in a system that is quite attractive from a dynamical point of view, and does not have many of the deficiencies described above.
  Indeed, the system we obtain has an infinite family of localized stationary solutions, the largest of which asymptotically agrees with the classical Bose-Einstein statistics near $x = 0$.
  Further, for this system, every solution converges to some equilibrium solution as $t \to \infty$, and the total photon number is a decreasing function of time.
  Being hyperbolic, this system naturally allows a non-zero out-flux of photons at $x = 0$ corresponding to the formation of a Bose-Einstein condensate. 
We believe that the methods that we develop for our analysis may prove useful for study of the full Kompaneets equation and other 
models with related behavior.

  To derive the model we study, let~$n = x^2 f$ be the photon number density.
  Equation~\eqref{eqnKomp} now becomes
  \begin{equation}\label{eqnKn}
    \partial_t n =
      \partial_x \paren[\big]{  x^2 \partial_x n - 2xn + x^2 n + n^2}.
  \end{equation}
  Neglecting the dissipation term $x^2 \partial_x n$ in the flux gives us the model equation
  \begin{equation}\label{eqnSKomp}
    \partial_t n + \partial_x F = 0\,,
    \qquad 
    F(x, n) = (2x - x^2) n - n^2,
  \end{equation}
  on the domain $x > 0, t > 0$.
  From physical considerations we impose the boundary condition
  \begin{equation}\label{eqnSKompBC}
    F(x, n) \to 0
    \quad\text{as } x \to \infty.
  \end{equation}
  As we will see shortly, no boundary condition is required at $x = 0$.  For convenience, we will work with solutions initially having compact support in $x$. (This property is preserved 
  for all time $t>0$.)

  The system \eqref{eqnSKomp}--\eqref{eqnSKompBC} is a nonlinear hyperbolic problem with a position dependent flux.
  Following~\cite{Kruz}, it is natural to restrict our study to entropy solutions of this system.
  For clarity of presentation we postpone the definition of entropy solutions to Section~\ref{sxnExistence} and present our main results below.

  Our first result shows that \eqref{eqnSKomp}--\eqref{eqnSKompBC} admits a unique entropy solution, without imposing a boundary condition at $x = 0$.
  This solution approaches a stationary solution as $t \to \infty$, and the total photon number is non-increasing as a function of time.

  \begin{theorem}\label{thmMain}%
    For any non-negative, compactly supported initial data $n_0 \in L^1$, there exists a unique, non-negative, \emph{time global}, entropy solution to~\eqref{eqnSKomp}-\eqref{eqnSKompBC} such that
    \begin{equation}\label{eqnNormNBV}
      n \in L^\infty( [0, \infty), L^1[0, \infty) )
      \quad\text{and}\quad
      (1 - e^{-t}) n( \cdot, t ) \in L^\infty\paren[\big]{ [0, \infty), \operatorname{BV}( [0, \infty) ) }.
    \end{equation}
    Additionally, this solution (denoted by~$n$), satisfies the following properties:
    \begin{asparaenum}
      \item\label{thm1part1}
	There exists a unique $\alpha \in [0, 2]$ such that
	\begin{equation}\label{eqnNtoNAlpha}
	  \lim_{t \to \infty} \intzi \abs[\big]{ n(t, x) - \hat n_\alpha(x) } \, dx
	  = 0.
	\end{equation}
	Here $\hat n_\alpha$ are (all) the equilibrium entropy solutions, and are defined by
	\begin{equation}\label{eqnNAlpha}
	  \hat n_\alpha(x) =
	    \begin{dcases}
	      0	    & x \not\in (\alpha, 2),\\
	      2x - x^2    & x \in (\alpha, 2).
	    \end{dcases}
	\end{equation}

      \item\label{thm1part2}
	The total photon number
	\begin{equation*}
	  N[n(t, \cdot)] \defeq \int_0^\infty n( t,x) \, dx
	\end{equation*}
	 is a non-increasing function of time.
    \end{asparaenum}
  \end{theorem}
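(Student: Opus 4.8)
The plan is to treat \eqref{eqnSKomp} as a scalar balance law with the uniformly concave flux $F(x,n)=a(x)n-n^2$, where $a(x)\defeq 2x-x^2$, and to carry out the Kruzhkov program adapted to the $x$-dependent flux and the two boundaries. For existence I would regularize by vanishing viscosity, $\partial_t n^\varepsilon+\partial_x F(x,n^\varepsilon)=\varepsilon\,\partial_{xx}n^\varepsilon$, deduce nonnegativity and a uniform $L^1$ bound from $F(x,0)=0$ together with the maximum principle, and then extract an entropy solution by $\operatorname{BV}$ compactness as $\varepsilon\to0$. Uniqueness and the $L^1$-contraction would come from Kruzhkov's doubling of variables. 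The assertion that no boundary condition is needed at $x=0$ I would justify by noting that the characteristic speed $F_n(x,n)=a(x)-2n$ is strictly negative at $x=0$ whenever $n>0$; thus $x=0$ is a genuine outflow boundary, which is exactly the mechanism that lets photons leave the domain.

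The structural engine behind the long-time behavior is the observation that the characteristic system $\dot x=F_n=a(x)-2n$, $\dot n=-F_x=2(x-1)n$ is Hamiltonian with Hamiltonian equal to $F$ itself. Consequently $F$ is conserved along characteristics, and the characteristic curves in the $(x,n)$ half-plane are precisely the level sets $\{F=\mathrm{const}\}$; in particular the stationary profiles sit on $\{F=0\}$, where $n=0$ or $n=a(x)$, which explains \eqref{eqnNAlpha}. For the weighted regularity in \eqref{eqnNormNBV} I would exploit the uniform concavity $F_{nn}=-2$ to derive an Oleinik-type one-sided bound on the characteristic speed. Because of the balance term $a'(x)n$, the associated differential inequality along characteristics is not the autonomous Riccati $\dot z=-z^2$ of the pure convex case (which would give a $1/t$ bound) but a logistic-type inequality whose maximal solution blowing up as $t\to0^+$ behaves like $1/(1-e^{-t})$; this is the source of the weight $(1-e^{-t})$, and combined with the $L^1$ bound it controls the total variation for each $t>0$.

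Part \ref{thm1part2} is the most direct. Integrating \eqref{eqnSKomp} in $x$ and using the boundary condition \eqref{eqnSKompBC} at infinity gives $\tfrac{d}{dt}N[n(t,\cdot)]=F\bigl(0^+,n(0^+,t)\bigr)$, and since $F(0,n)=-n^2\le0$ the photon number is non-increasing. The only care required is that the boundary trace $n(0^+,t)$ exist and the outflow flux be well defined, which is supplied by the $\operatorname{BV}$ regularity for $t>0$ from \eqref{eqnNormNBV}; any additional loss through entropy dissipation only decreases $N$ further.

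Part \ref{thm1part1} is the crux and where I expect the real difficulty. Monotonicity and nonnegativity of $N$ yield convergence of the photon number, while the uniform $\operatorname{BV}$ bound (say for $t\ge1$) gives precompactness in $L^1_{\mathrm{loc}}$, so any sequence $t_k\to\infty$ has a subsequence along which $n(\cdot,t_k)$ converges to a stationary entropy solution. I would first classify these: using that $F$ is conserved along characteristics and that any $L^1$ steady state must vanish for $x>2$ (where $a<0$), the flux constant is forced to be $0$, so $n\in\{0,a(x)\}$ pointwise, and the entropy condition together with the outflow at $x=0$ then forces the single-interval form \eqref{eqnNAlpha}. The genuine obstacle is upgrading subsequential convergence to full convergence and pinning down a single $\alpha$: the linearization of the characteristic flow at the interior rest point $(1,\tfrac12)$ is a center, so one must exclude persistent oscillation. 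I would address this by propagating the level-set structure of $F$ forward in time and showing that the left edge of the occupied region is eventually monotone, so that it converges to a definite $\alpha$; establishing this monotonicity and then the $L^1$ convergence \eqref{eqnNtoNAlpha} to the corresponding $\hat n_\alpha$ is the heart of the argument.
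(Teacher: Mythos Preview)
Your outline for existence, uniqueness, the $\operatorname{BV}$ bound, and Part~\ref{thm1part2} tracks the paper closely: vanishing viscosity for existence, Kruzhkov doubling for the $L^1$ contraction and uniqueness, an Oleinik-type one-sided Lipschitz bound (derived at the viscous level) for the weighted $\operatorname{BV}$ estimate, and the outflow identity $F(0,n)=-n^2\le0$ for the monotonicity of $N$. The paper phrases Part~\ref{thm1part2} as the $L^1$ contraction with the trivial equilibrium $\hat n_2\equiv0$, but your flux computation is equivalent.

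The gap is in Part~\ref{thm1part1}, precisely where you flag the difficulty. You propose to pin down a single $\alpha$ by showing that ``the left edge of the occupied region is eventually monotone,'' invoking the level-set structure of $F$ along characteristics. This is not a proof, and in fact the Hamiltonian picture you describe is the obstruction rather than the tool: since the interior rest point is a center, the characteristic dynamics alone give no mechanism forcing monotonicity of the left edge, and shocks can reset the picture. The paper does \emph{not} attempt any such monotonicity argument. Instead it exploits the $L^1$ contraction against the \emph{entire one-parameter family} of equilibria: for each $\beta\in[0,2]$ the quantity
\[
C_\beta(t)\defeq\int_0^\infty\abs{n(t,x)-\hat n_\beta(x)}\,dx
\]
is non-increasing in $t$ (this is just the contraction estimate with $n'=\hat n_\beta$, once one knows the solution stays supported in $[0,R]$). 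Hence each $C_\beta(t)$ has a limit $\bar C_\beta$, and any subsequential $L^1$ limit $n_\infty$ satisfies $\bar C_\beta=\int_0^\infty\abs{n_\infty-\hat n_\beta}\,dx$ for all $\beta$. The second key point, which your proposal is missing, is that the sharp upper bound $n_\infty\le\hat n_0$ (obtained from an explicit super-solution at the viscous level) makes this family of integrals \emph{invertible}: one computes $\bar C_\beta=\int_0^\beta n_\infty+\int_\beta^2(\hat n_0-n_\infty)$ and differentiates in $\beta$ to recover $n_\infty$ pointwise. Thus all subsequential limits coincide, and full convergence follows without ever tracking the left edge. Your approach, as stated, does not supply a substitute for this step.
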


  Observe no boundary condition is imposed (or required) at the left endpoint~$x = 0$, and we will directly prove uniqueness of non-negative entropy solutions without any flux condition at $x = 0$.
  As we will see a possible out-flux can occur at $x = 0$ leading to a concentration of photons at zero energy (i.e.\ energy that is negligible on the scales described by the Kompaneets model).
  As remarked earlier, we interpret this out-flux 
  as a contribution to a Bose-Einstein condensate. Our result above shows that if the Bose-Einstein condensate forms, it can only increase in mass.

  We remark that the classical Bose-Einstein statistics postulate that the equilibrium photon energy distribution $x^2 f$ is
  \begin{equation*}
    x^2 f_\mu(x) = \frac{x^2}{e^{x+\mu} - 1}
  \end{equation*}
  for $\mu \geq 0$.
  Near the origin, $x^2 f_\mu$ is linear for $\mu = 0$ and quadratic for $\mu > 0$.
  All these solutions decay exponentially as $x \to \infty$.
Because we neglect the diffusion term in \eqref{eqnKn},
our equilibrium solutions no longer take this classical Bose-Einstein form.
  But they have similar asymptotic behavior for small $x$: $\hat n_\alpha$ is linear near the origin for $\alpha = 0$ and vanishes in the interval $[0, \alpha]$ for $\alpha > 0$.
  All our equilibrium solutions are compactly supported, and are identically $0$ for $x > 2$.

  Note Theorem~\ref{thmMain} only guarantees the total photon number is decreasing.
  We can, however, obtain a more precise description of this phenomenon. 
  \begin{proposition}\label{ppnLoss}
    If $n$ is a non-negative entropy solution to~\eqref{eqnSKomp}--\eqref{eqnSKompBC} with compactly supported initial data $n_0 \in L^1$,
    then
    \begin{equation}\label{eqnloss}
      N[n(T, \cdot)] + \int^T_0n(t,0)^2\ dt = N[n_0].
    \end{equation}
  \end{proposition}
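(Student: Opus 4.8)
The plan is to obtain \eqref{eqnloss} as an exact mass balance for the conservation law \eqref{eqnSKomp}, keeping careful track of the flux through each endpoint. Formally, integrating $\partial_t n = -\partial_x F$ in $x$ over $(0,\infty)$ gives
\[
  \frac{d}{dt} N[n(t,\cdot)] = F(0, n(t,0)) - \lim_{x\to\infty} F(x, n(t,x)).
\]
The flux at infinity vanishes: by \eqref{eqnSKompBC}, and in fact because $n(\cdot,t)$ remains compactly supported for every $t$, this term is identically zero. At the origin the transport part $(2x-x^2)n$ of the flux carries a factor of $x$ and hence drops out, leaving $F(0,n) = -n^2$. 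Thus $\tfrac{d}{dt}N[n(t,\cdot)] = -n(t,0)^2$, and integrating in $t$ from $0$ to $T$ yields \eqref{eqnloss}.

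To make this rigorous within the entropy framework, first I would insert into the weak formulation of the solution a spatial cutoff $\phi_\varepsilon$, smooth, equal to $1$ on $[\varepsilon, R]$ and supported in $[\varepsilon/2, 2R]$, integrated against the equation over the time strip $(0,T)$. This produces the time-boundary terms $\int n(T,x)\phi_\varepsilon\,dx - \int n_0(x)\phi_\varepsilon\,dx$, together with the spatial error term $\int_0^T\!\!\int F(x,n)\,\partial_x\phi_\varepsilon\,dx\,dt$. Second I would send $R\to\infty$; since the solution is compactly supported, for $R$ large the outer part of this error term vanishes and the time-boundary terms converge to $N[n(T,\cdot)]-N[n_0]$. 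Third, and this is the crux, I would send $\varepsilon\to 0$ and identify the contribution of the inner cutoff region with $-\int_0^T n(t,0)^2\,dt$.

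The main obstacle is this last step: one must show the weak formulation picks up exactly the outflow flux $F(0,n(t,0)) = -n(t,0)^2$ at the degenerate endpoint $x=0$, and not some boundary-entropy-modified value. The key input is the regularity \eqref{eqnNormNBV}: since $(1-e^{-t})n(\cdot,t)$ lies in $\operatorname{BV}$, the one-sided trace $n(t,0^+)$ exists for a.e.\ $t>0$, and the flux $F(\cdot,n)$ is continuous up to $x=0$ with value $-n(t,0^+)^2$ there. This BV bound controls the error term $\int_0^T\!\!\int F\,\partial_x\phi_\varepsilon\,dx\,dt$ as $\varepsilon\to0$ and lets the boundary trace emerge. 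One must also check that the characteristic speed $\partial_n F = 2x - x^2 - 2n$ satisfies $\partial_n F = -2n\le 0$ at $x=0$, so that the boundary is of outflow type and no boundary condition is imposed there; this is precisely what makes the unmodified physical flux $-n^2$ the admissible one, consistent with the uniqueness asserted in Theorem~\ref{thmMain}.
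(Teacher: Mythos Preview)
Your approach is correct and is essentially the same as the paper's: both insert into the weak formulation~\eqref{eqnweakform} a test function approximating the indicator of $(0,T)\times(0,R)$, use the compact-support result (Lemma~\ref{lmaSupport}) to kill the flux at $x=R$, and pass to the limit to recover the boundary trace $F(0,n(t,0))=-n(t,0)^2$. The paper's proof is shorter only because it recycles the specific test function~\eqref{chartest} and the limit computation already carried out in the proof of Lemma~\ref{lmaPhiineq}, rather than building the cutoff from scratch as you do.
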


  Physically, this means that photons can only be ``lost'' to the Bose-Einstein condensate, and not to infinity.
  Deferring the proof of Proposition~\ref{ppnLoss} to Section~\ref{sxnExistence}, we 
  now exhibit situations where the Bose-Einstein condensate must necessarily form in finite time.
  This is our next result, the proof of which is presented in Section~\ref{sxnMainProof}.

  \begin{corollary}\label{clyFTCondensate}
    Let $n$ be a non-negative entropy solution to~\eqref{eqnSKomp}--\eqref{eqnSKompBC} with initial data $n_0 \in L^1$.
    If $n_0$ is compactly supported and $N[n_0] > N[\hat n_0]$,
    there exists $T > 0$ such that
    \begin{equation*}
      N[n(T, \cdot)] < N[n_0].
    \end{equation*}
    In this situation the Bose-Einstein condensate necessarily forms in finite time.
  \end{corollary}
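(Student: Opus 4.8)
The plan is to combine the long-time convergence~\eqref{eqnNtoNAlpha} with the monotonicity of the total photon number asserted in Theorem~\ref{thmMain}, and read off the result. First I would record the elementary fact that, among the equilibria, the total number $N[\hat n_\alpha]$ is largest when $\alpha = 0$. Indeed, from~\eqref{eqnNAlpha} one has $\hat n_\alpha(x) = (2x - x^2)\,\mathbbm{1}_{(\alpha, 2)}(x)$, and since $2x - x^2 \ge 0$ on $(0, 2)$ this gives $\hat n_\alpha \le \hat n_0$ pointwise; hence
\begin{equation*}
  N[\hat n_\alpha] \le N[\hat n_0] = \intzi \hat n_0(x)\dx = \int_0^2 (2x - x^2)\dx = \frac{4}{3}
\end{equation*}
for every $\alpha \in [0, 2]$.

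Next I would identify the limit of the total number. Since the solution is non-negative, $N[n(t, \cdot)] = \intzi \abs{n(t, x)}\dx$, so
\begin{equation*}
  \abs[\big]{ N[n(t, \cdot)] - N[\hat n_\alpha] }
    = \abs[\Big]{ \intzi \paren[\big]{ n(t, x) - \hat n_\alpha(x) }\dx }
    \le \intzi \abs[\big]{ n(t, x) - \hat n_\alpha(x) }\dx,
\end{equation*}
which tends to $0$ by~\eqref{eqnNtoNAlpha}. Thus $\lim_{t \to \infty} N[n(t, \cdot)] = N[\hat n_\alpha]$, and by the previous paragraph together with the hypothesis $N[n_0] > N[\hat n_0]$ this limit satisfies $N[\hat n_\alpha] \le N[\hat n_0] < N[n_0]$.

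The conclusion is then immediate. The map $t \mapsto N[n(t, \cdot)]$ is non-increasing, yet its limit as $t \to \infty$ is strictly smaller than its initial value $N[n_0]$. Consequently there must exist a finite time $T > 0$ with $N[n(T, \cdot)] < N[n_0]$; concretely, one may take any $T$ for which $N[n(T, \cdot)]$ falls below $\tfrac12\paren{N[\hat n_\alpha] + N[n_0]}$. Invoking Proposition~\ref{ppnLoss}, this strict deficit means $\int^T_0 n(t, 0)^2\dt = N[n_0] - N[n(T, \cdot)] > 0$, i.e.\ a genuine positive out-flux of zero-energy photons on $[0, T]$, which we interpret as the formation of the Bose-Einstein condensate.

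I do not anticipate any serious difficulty here: the corollary is essentially a bookkeeping consequence of the two conclusions of Theorem~\ref{thmMain}. The only step warranting a little care is the passage from the $L^1$-convergence~\eqref{eqnNtoNAlpha} to convergence of the total photon number $N[n(t, \cdot)]$, carried out by the triangle inequality above, together with the elementary observation that $\alpha = 0$ maximizes $N[\hat n_\alpha]$ over the admissible family $\set{\hat n_\alpha \mathop{\mid} \alpha \in [0,2]}$.
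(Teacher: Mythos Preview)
Your argument is correct and follows essentially the same route as the paper: use~\eqref{eqnNtoNAlpha} to get $\lim_{t\to\infty} N[n(t,\cdot)] = N[\hat n_\alpha] \le N[\hat n_0] < N[n_0]$, then infer a strict drop at some finite time (you even pick the same midpoint threshold). The only differences are that you spell out the triangle-inequality step and the maximality of $N[\hat n_0]$ in more detail, and you additionally invoke Proposition~\ref{ppnLoss} to interpret the deficit as positive out-flux; the paper's proof leaves these implicit.
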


  In general, even though the system approaches one of the equilibria $\set{\hat n_\alpha}_{\alpha \in [0, 2]}$, we have no way of determining which one.
  We can, however, establish a non-zero lower bound on the total photon number in equilibrium.  Below, we use the notation $a\varmin b=\min(a,b)$
  and $a_+=\max(a,0)$.

  \begin{corollary}\label{clyNTFreeNCond}
    Let $n$ be a non-negative entropy solution to~\eqref{eqnSKomp}--\eqref{eqnSKompBC} with compactly supported initial data $n_0 \in L^1$.
    Let $\hat n_\alpha$ be the equilibrium solution for which~\eqref{eqnNtoNAlpha} holds.
    Then
    \begin{equation}\label{eqnNTFree}
      N[ \hat n_\alpha]
	\geq \sup_{t \geq 0} \int_0^2 \bigl(n(t, x) \varmin \hat n_0(x)\bigr) \, dx.
    \end{equation}
    Further, if $n_0$ is not identically $0$, neither is $\hat n_\alpha$.
  \end{corollary}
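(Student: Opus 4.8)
The plan is to compare the solution $n$ with the maximal equilibrium $\hat n_0$ and to extract monotonicity of the functional on the right of~\eqref{eqnNTFree}. Write
\[
  M(t) \defeq \int_0^2 \bigl(n(t,x) \varmin \hat n_0(x)\bigr) \dx = \intzi (n \varmin \hat n_0) \dx,
\]
the two integrals agreeing because $\hat n_0$ is supported in $(0,2)$. Since $\hat n_\alpha \leq \hat n_0$ pointwise, we have $\hat n_\alpha \varmin \hat n_0 = \hat n_\alpha$, and since $u \mapsto \intzi (u \varmin \hat n_0)\dx$ is $1$-Lipschitz on $L^1$, the convergence $n(t,\cdot) \to \hat n_\alpha$ in $L^1$ from Theorem~\ref{thmMain} gives $M(t) \to N[\hat n_\alpha]$ as $t \to \infty$. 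Using $a \varmin b = b - (b-a)_+$ I also record
\[
  M(t) = N[\hat n_0] - \intzi (\hat n_0 - n(t,\cdot))_+ \dx .
\]
Thus~\eqref{eqnNTFree} is equivalent to the statement that $M$ is non-decreasing: if $M$ increases up to its limit $N[\hat n_\alpha]$, then $M(t) \leq N[\hat n_\alpha]$ for every $t$, which is exactly $N[\hat n_\alpha] \geq \sup_{t\geq 0} M(t)$.

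The heart of the argument is therefore to show that $t \mapsto \intzi (\hat n_0 - n)_+ \dx$ is non-increasing. Both $n$ and $\hat n_0$ are entropy solutions of~\eqref{eqnSKomp} (the latter stationary), so I would invoke the Kruzhkov-type comparison machinery developed for the uniqueness proof in Section~\ref{sxnExistence}. Formally, the one-sided Kruzhkov inequality applied to the pair $(\hat n_0, n)$ yields
\[
  \partial_t (\hat n_0 - n)_+ + \partial_x\Bigl[\mathbbm{1}_{\{\hat n_0 > n\}}\bigl(F(x,\hat n_0) - F(x,n)\bigr)\Bigr] \leq 0,
\]
and integrating in $x$ over $(0,\infty)$ leaves only boundary contributions. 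At $x = \infty$ the fluxes vanish by~\eqref{eqnSKompBC}. At $x = 0$ the key observation is that $\hat n_0(0) = 0 \leq n(t,0)$, so $\mathbbm{1}_{\{\hat n_0 > n\}}$ vanishes there and the boundary flux vanishes as well. Hence $\frac{d}{dt}\intzi(\hat n_0 - n)_+ \dx \leq 0$, giving the monotonicity of $M$ and thus~\eqref{eqnNTFree}. (This in fact yields the equality $N[\hat n_\alpha] = \sup_{t\geq 0} M(t)$, though only the stated inequality is needed.)

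For the final assertion, suppose toward a contradiction that $\hat n_\alpha \equiv 0$, i.e.\ $N[\hat n_\alpha] = 0$. Then $0 = \sup_{t\geq 0} M(t) \geq M(t) \geq 0$ forces $M \equiv 0$. Since $\hat n_0 > 0$ throughout $(0,2)$, the vanishing $n \varmin \hat n_0 = 0$ forces $n(t,\cdot) = 0$ a.e.\ on $(0,2)$ for every $t$; in particular the one-sided trace (which exists by the $\operatorname{BV}$ regularity in~\eqref{eqnNormNBV}) satisfies $n(t,0) = 0$. Proposition~\ref{ppnLoss} then gives $N[n(t,\cdot)] = N[n_0]$ for all $t$, while Theorem~\ref{thmMain} forces $N[n(t,\cdot)] \to N[\hat n_\alpha] = 0$. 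Hence $N[n_0] = 0$, contradicting $n_0 \not\equiv 0$, so $\hat n_\alpha \not\equiv 0$.

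The main obstacle is the comparison inequality of the second paragraph. Making the one-sided Kruzhkov estimate rigorous requires care with the position-dependent flux $F(x,\cdot)$, whose explicit $x$-dependence produces source terms in the doubling-of-variables argument that must be shown to cancel for two solutions of the same equation; and, crucially, one must justify that the outflow boundary at $x = 0$—where no boundary condition is imposed—contributes no positive term to $\frac{d}{dt}\intzi(\hat n_0 - n)_+\dx$. Here the sign structure $F(0,n) = -n^2 \leq 0$ together with $\hat n_0(0) = 0$ is exactly what makes the boundary term drop, and I expect the rigorous treatment to reuse the trace and entropy-flux estimates established for Theorem~\ref{thmMain} and Proposition~\ref{ppnLoss}.
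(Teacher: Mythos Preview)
Your argument is correct and in fact slightly sharper than the paper's, but the route is different.

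For \eqref{eqnNTFree} the paper does not work directly with the one-sided contraction for $(\hat n_0 - n)_+$.  Instead, for each fixed $t_0$ it introduces an auxiliary solution $\underline n$ with initial data $n(t_0,\cdot)\varmin \hat n_0$, applies the comparison principle (Lemma~\ref{lmaCompare}) twice to get $\underline n(t,\cdot)\leq n(t_0+t,\cdot)\varmin\hat n_0$, and then uses Proposition~\ref{ppnLoss} together with $\underline n(t,0)=0$ to see that $\underline n$ conserves photon number; combining these gives $N[\hat n_\alpha]\geq N[\underline n(0,\cdot)]=M(t_0)$.  Your approach cuts out the auxiliary solution by applying the one-sided Kruzhkov estimate (Lemma~\ref{lmaPhiineq} with $\Psi(s)=s_+$) directly to the pair $(\hat n_0,n)$, the boundary term at $x=0$ dropping exactly because $\hat n_0(0)=0\leq n(t,0)$.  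This is more direct and yields the extra information that $M$ is monotone, hence equality in \eqref{eqnNTFree}; the paper's proof is more modular, using only Lemma~\ref{lmaCompare} and Proposition~\ref{ppnLoss} as black boxes.

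For the ``$\hat n_\alpha\not\equiv0$'' part your contradiction argument is also different and somewhat cleaner: from $M\equiv0$ you deduce $n(t,0)=0$, then conservation of $N$ via Proposition~\ref{ppnLoss}, then $N[n_0]=N[\hat n_\alpha]=0$.  The paper instead splits into cases: either $N$ is conserved (giving $N[\hat n_\alpha]=N[n_0]>0$ directly), or the loss formula forces $n(T',0)>0$ at some time, and then the one-sided Lipschitz bound (Lemma~\ref{lmalipbound}) is invoked to get $M(T')>0$.  Your version avoids Lemma~\ref{lmalipbound} altogether.
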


  The proof of Corollary~\ref{clyNTFreeNCond} requires a comparison principle which, for clarity of presentation, is also deferred to Section~\ref{sxnMainProof}.

  \subsection*{Plan of this paper}
  This paper is organized as follows.
  In Section~\ref{sxnSMainProof} we prove Theorem~\ref{thmMain} and Corollaries~\ref{clyFTCondensate} and \ref{clyNTFreeNCond}.
  Our proof relies on several lemmas and uses the notion of entropy solutions \'{a} la~\cite{Kruz}.
  Even though this is now standard, it involves a number of technicalities to adapt the results to the present situation.
  Thus for clarity of presentation, we define entropy solutions and prove Proposition~\ref{ppnLoss} (and the comparison and contraction lemmas) in Section~\ref{sxnExistence}.
  Finally in Section~\ref{sxnCompactness} we construct the appropriate ``sub'' and ``super''-solutions required to control the long time behavior of the system.

  \section{Proof of the main theorem}\label{sxnSMainProof}

  Our goal in this section is to prove the main theorem.
  The proof consists of several ingredients, some of which are technical.
  For clarity of presentation we briefly explain each part in a subsection below, and then prove Theorem~\ref{thmMain}.
  Due to its technical nature we postpone the definition and proof of existence of entropy solutions to Section~\ref{sxnExistence}.

  \subsection{Stationary solutions}
  We begin by computing the stationary solutions.
  \begin{lemma}\label{lmaStationary}
    All stationary entropy solutions to~\eqref{eqnSKomp} are given by~\eqref{eqnNAlpha} for some $\alpha \in [0, 2]$.
  \end{lemma}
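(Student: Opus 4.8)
The plan is to first reduce the problem to a pointwise algebraic constraint, and then use the entropy condition to rule out all but the claimed profiles. Suppose $n$ is a stationary entropy solution. Being stationary, it satisfies $\partial_x F(x, n(x)) = 0$ in the sense of distributions (choosing separated test functions $\psi(x)\chi(t)$ kills the time term), so $F(x, n(x))$ is constant in $x$; the far-field condition~\eqref{eqnSKompBC} forces this constant to vanish. Hence
\begin{equation*}
  F(x, n(x)) = (2x - x^2) n(x) - n(x)^2 = n(x)\brak[\big]{(2x - x^2) - n(x)} = 0
  \quad\text{a.e.}
\end{equation*}
Since $n \geq 0$, at a.e.\ $x$ either $n(x) = 0$ or $n(x) = 2x - x^2$, the latter being possible only where $2x - x^2 \geq 0$, i.e.\ for $x \in [0, 2]$. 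In particular $n \equiv 0$ on $(2, \infty)$, and on $(0, 2)$ the solution takes values in the two branches $n = 0$ and $n = 2x - x^2$.

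Second, I would analyze the admissible transitions between the two branches using the entropy condition. The characteristic speed is $F_n(x, n) = (2x - x^2) - 2n$, which on $(0, 2)$ equals $2x - x^2 > 0$ on the lower branch $n = 0$ and equals $-(2x - x^2) < 0$ on the upper branch $n = 2x - x^2$. A stationary jump between the two branches has flux $0$ on both sides, so it trivially satisfies the Rankine--Hugoniot condition with speed $0$, and it remains only to check Oleinik/Lax admissibility. An upward jump (from $0$ on the left to $2x - x^2$ on the right) is compressive, since characteristics enter the discontinuity from both sides, and is therefore entropy-admissible; a downward jump (from $2x - x^2$ to $0$) is expansive and hence inadmissible. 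Equivalently, the two-sided Oleinik chord condition at the jump point holds for the upward orientation and fails for the downward one.

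Third, I would assemble these facts. Forbidding interior downward jumps means that once $n$ equals $2x - x^2$ it must continue to do so until $x = 2$, where the upper branch meets $0$ continuously; in particular there can be at most one transition point. Consequently the set on which $n = 2x - x^2$ is an interval of the form $(\alpha, 2)$, and $n = \hat n_\alpha$ as in~\eqref{eqnNAlpha}. The endpoint cases are included: $\alpha = 0$ corresponds to the graph of $2x - x^2$ with no jump, and $\alpha = 2$ to the trivial solution $n \equiv 0$.

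The main obstacle is making the admissibility argument rigorous within the Kruzhkov framework for the spatially dependent flux, and excluding pathological (for instance non-$\operatorname{BV}$ or Cantor-type) arrangements of the ``on'' and ``off'' sets. I would handle this by testing the stationary Kruzhkov entropy inequalities against suitable constants $k$ and nonnegative test functions to show that the indicator of the upper-branch set can only jump upward as $x$ increases; the boundedness $0 \leq n \leq 1$ on $(0, 2)$ together with the two-valued constraint established above then forces the single-interval structure and completes the proof.
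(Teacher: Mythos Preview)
Your reduction to the pointwise constraint $n(x)\in\{0,\,(2x-x^2)_+\}$ is identical to the paper's first step. Where you diverge is in ruling out downward jumps: you argue directly via Lax/Oleinik admissibility at a stationary shock, using that $F$ is strictly concave in $n$ so that the upward jump is compressive and the downward jump is expansive. The paper instead invokes its Lemma~\ref{lmalipbound}, a one-sided Lipschitz bound $\partial_x n\geq \underline m(t,R)$ proved later by differentiating the viscous approximation and constructing a sub-solution for $\partial_x n_\epsilon$. Since a stationary solution viewed as time-independent data is compactly supported in $[0,2]$, that lemma applies and immediately forbids downward jumps.

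Your route is more elementary and self-contained for this particular lemma, avoiding the viscous-limit machinery; the paper's route is shorter in situ because the Lipschitz bound is needed anyway for the compactness argument in Theorem~\ref{thmMain}. The one place your sketch is genuinely incomplete is the last paragraph: you correctly flag that Kruzhkov's inequality, not the Lax condition, is the primitive hypothesis here, and that a priori the ``on'' set could be a bad measurable set rather than an interval. Carrying this out is routine for a strictly concave flux (test~\eqref{eqnweakentropy} with $k$ strictly between the two branch values near a putative downward transition and localize $\phi$), but it does need to be written; the paper sidesteps this entirely because the one-sided Lipschitz bound already gives BV regularity and the jump-sign restriction simultaneously.
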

  \begin{proof}
  Clearly if $n$ is a stationary solution to~\eqref{eqnSKomp}, then we must have $F(x, n) = c$ for some constant $c$.
  Since our boundary condition requires the incoming flux to vanish as $x \to \infty$, we must have $c = 0$.
  Thus looking for non-negative solutions to $F(x, n) = 0$ yields
  \begin{equation}\label{eqnStationary1}
    n(x) = 0
    \qquad\text{or}\qquad
    n(x) = (2x - x^2)_+\,.
  \end{equation}
  We show in Lemma~\ref{lmalipbound} that at points of discontinuity, entropy solutions (with compactly supported initial data) can \emph{only have upward jumps}.
  Combined with~\eqref{eqnStationary1} this immediately proves the lemma as desired.
  \end{proof}

  \subsection{Regularity of Entropy Solutions and Compactness}

  In the proof of Lemma~\ref{lmaStationary} we used the fact that entropy solutions can only have upward jumps.
  In fact, a much stronger result holds: the derivative of an entropy solution is bounded below, which leads to a BV estimate.
  Since this stronger fact will be used later, we state the lemmas leading to this result next.
  \begin{lemma}\label{lmaupperbound}
    Let $n$ be an entropy solution to~\eqref{eqnSKomp} with non-negative $L^1$ initial data which is supported on $[0, R]$ for some $R > 0$.
    Then
    \begin{equation}\label{eqnNooBound}
        \limsup_{t \to \infty} n(t, x) \leq \hat n_0(x) = (2x-x^2)_+\,,
    \end{equation}
    where $\hat n_0$ is the maximal equilibrium solution defined in~\eqref{eqnNAlpha} with $\alpha = 0$.%
    \end{lemma}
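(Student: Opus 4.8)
The plan is to prove~\eqref{eqnNooBound} via the comparison principle for entropy solutions (established in Section~\ref{sxnExistence}), dominating $n$ by explicit supersolutions that relax down to $\hat n_0$. Since the assertion concerns only $\limsup_{t\to\infty}$, I may first replace the initial time by any $t_0>0$. By the regularity in~\eqref{eqnNormNBV}, $n(\cdot,t_0)\in\operatorname{BV}([0,\infty))$ and is therefore bounded, with bounded support. Restarting the solution at $t_0$, I may thus assume without loss of generality that $0\le n_0\le M$ for some $M$ and that $\operatorname{supp} n_0\subseteq[0,R]$.

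The key objects are the continuous stationary supersolutions obtained by solving $F(x,n)=c$ for $c<0$: writing $s=2x-x^2$, set $n_c(x)=\tfrac12\bigl(s+\sqrt{s^2-4c}\bigr)$. Each $n_c$ is a genuine continuous stationary solution of the PDE~\eqref{eqnSKomp} on $(0,\infty)$ (though not of the boundary condition), with $n_c\to0$ as $x\to\infty$, and $n_c\downarrow\hat n_0$ pointwise as $c\uparrow0$. For $|c|$ large enough one has $n_c\ge n_0$, so the comparison principle yields the uniform bound $n(t,\cdot)\le n_c$ for all $t$. This already confines the solution, and taking $c\uparrow 0$ would give the claim were $n_c$ attained only in the limit; the difficulty is that a single solution is dominated by a fixed $n_c$, so I must instead produce genuinely time-dependent barriers that decay to $\hat n_0$.

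For this I will use the two features of~\eqref{eqnSKomp} that drive the relaxation. First, the flux is concave in $n$ ($F_{nn}=-2$), so by Lemma~\ref{lmalipbound} only upward jumps are admissible; in particular the right edge $X(t)=\sup\operatorname{supp} n(t,\cdot)$ cannot be a downward jump to vacuum, so $n$ vanishes continuously there and $X$ propagates along the $n\equiv0$ characteristic $\dot X=X(2-X)$, whence $X(t)\to2$ and $\limsup_{t\to\infty}n(t,x)=0=\hat n_0(x)$ for every $x>2$. Second, on $(0,2)$ I build admissible time-dependent supersolutions of the form $\bar n(x,t)=A(t)$ for $0\le x<a(t)$ and $\bar n(x,t)=\hat n_0(x)$ for $x\ge a(t)$, joined by an upward jump at $x=a(t)$ with $A(t)<\hat n_0(a(t))$. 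A direct computation shows that $\bar n$ is an entropy supersolution precisely when the interior inequality $\dot A\ge-(2-2a)A$ and the Rankine--Hugoniot inequality $\dot a\ge-A$ both hold; choosing, e.g., $A(t)=A_0e^{-\beta t}$ and $a(t)=a_0e^{-\beta t}$ with $A_0=\beta a_0$ and $\beta<2-2a_0$ satisfies both and drives $A(t),a(t)\to0$, so that $\bar n\downarrow\hat n_0$ on $(0,2)$. Comparison then gives $\limsup_{t\to\infty}n(t,x)\le\hat n_0(x)$ at each $x\in(0,2)$.

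The main obstacle is that these relaxing caps are small near $x=0$, so they dominate the data only once the solution has already entered a neighborhood of $\hat n_0$; the real work is the finite-time reduction of arbitrary bounded, compactly supported data into this regime. Here I will again exploit concavity: large values concentrated at small $x$ do not steepen into (inadmissible) shocks but instead spread to the right as a rarefaction while being carried out through the outflow boundary at $x=0$, so the excess mass above $\hat n_0$ is depleted in finite time. Making this quantitative --- and matching the barriers in the two delicate regions where $\hat n_0$ vanishes, near $x=0$ (the outflow) and near $x=2$ (the right edge) --- is the crux of the argument; the comparison principle of Section~\ref{sxnExistence} together with the explicit stationary and moving-plateau supersolutions above are the routine ingredients. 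Letting the barrier parameter tend to $0$ then yields~\eqref{eqnNooBound}.
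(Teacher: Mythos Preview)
Your proposal has a genuine gap: the comparison principle in Section~\ref{sxnExistence} (Lemma~\ref{lmaCompare}, via Lemmas~\ref{lmacontractcompare}--\ref{lmaPhiineq}) compares two entropy \emph{solutions}; it does not compare an entropy solution with a super-solution. Your moving-plateau barrier $\bar n$ is not an entropy solution: on the plateau $[0,a(t))$ one has $\partial_t\bar n+\partial_x F(x,\bar n)=\dot A+(2-2x)A$, which depends on $x$ and cannot vanish identically, so neither the weak formulation~\eqref{eqnweakform} nor the full Kruzkov inequality~\eqref{eqnweakentropy} holds (for smooth $\bar n$ and $k<\bar n$ the latter reduces to $\partial_t\bar n+\partial_x F\le 0$, which fails here). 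The paper makes exactly this point---``the notion of entropy super-solutions has not been developed rigorously''---and circumvents it by passing to the viscous approximations $n_\epsilon$, for which the parabolic comparison principle with a \emph{smooth} super-solution is available, and then sending $\epsilon\to0$. Your piecewise barrier, having a jump, is not a candidate for parabolic comparison either without further smoothing. There is a second gap as well: you correctly identify that the decaying caps dominate the data only once the solution is already near $\hat n_0$, and you call the finite-time reduction to this regime ``the crux of the argument''---but you do not carry it out; the rarefaction/outflow heuristic does not follow from the ingredients listed.

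The paper's route is both simpler and complete. It constructs a single smooth super-solution $\bar n(x,t)=\tfrac12\bigl(g+\sqrt{g^2+4K(t)G(x)}\,\bigr)$ with $g=2x-x^2$, $K(t)\sim t^{-2}$ and $G$ quadratic in $x$, which equals $+\infty$ at $t=0$ (hence dominates \emph{any} data immediately, obviating your finite-time reduction step) and decreases to $\hat n_0$ as $t\to\infty$. One application of the parabolic comparison principle to the viscous problem~\eqref{artvis} on a bounded interval, followed by $\epsilon\to0$, yields~\eqref{eqnNooBound} directly, with no piecewise barriers and no separate analysis near $x=0$ or $x=2$.
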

   \begin{lemma}\label{lmalipbound} 
    Let $n$ be an entropy solution to~\eqref{eqnSKomp} with non-negative $L^1$ initial data which is supported on $[0, R]$ for some $R > 0$.  Then for any $t>0$,
    a  one-sided Lipschitz bound holds: whenever  $0\leq x\leq y \leq R$,  
    \begin{equation}\label{eqnSupSol2}
      n(t, y) - n(t, x) \geq \underline{m}(t,R)(y - x)\,,
    \end{equation}
where $\underline{m}(t,R)<0$ depends only on $t$ and $R$
    and is increasing as a function of $t$.%
  \end{lemma}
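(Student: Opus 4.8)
The plan is to read the bound off the \emph{concavity of the flux in $n$}. Since $\partial_n^2 F = -2 < 0$, equation~\eqref{eqnSKomp} is of Oleinik type, so its entropy solutions ought to regularize one-sidedly: only the upward jumps used in Lemma~\ref{lmaStationary} should survive, while downward jumps (which would drive the difference quotient in~\eqref{eqnSupSol2} to $-\infty$) are forbidden. As the estimate concerns the spatial derivative, I would first establish it for smooth approximations $n^\varepsilon$ — for definiteness the vanishing-viscosity approximations solving $\partial_t n^\varepsilon + \partial_x F(x, n^\varepsilon) = \varepsilon\, \partial_{xx} n^\varepsilon$ (with a no-flux condition $\partial_x n^\varepsilon|_{x=0}=0$ at the outflow boundary, consistent with the condensation loss in Proposition~\ref{ppnLoss}), whose limit is the entropy solution — prove a bound, and pass to the limit, using that one-sided Lipschitz bounds are stable under $L^1$/a.e.\ convergence. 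Differentiating the viscous equation in $x$ and writing $w = \partial_x n^\varepsilon$, the identities $F_{nn} = -2$, $F_{xn} = 2 - 2x$, $F_{xx} = -2n$ give
\begin{equation*}
  \partial_t w + F_n\, \partial_x w = \varepsilon\, \partial_{xx} w + 2 w^2 - 4(1 - x) w + 2 n .
\end{equation*}
The decisive feature is the Riccati term $2w^2 \ge 0$, which drives $w$ upward whenever it is very negative; the term $2n \ge 0$ only helps, and the single unfavorable term is $-4(1-x) w$, whose sign is wrong precisely when $x > 1$ and $w < 0$.

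To control that term I would use that the support of $n(t,\cdot)$ stays inside $[0, R']$ with $R' = \max(R, 2)$: at the right edge of the support $n$ vanishes, so the edge propagates with speed $F_n = x(2 - x) \le 0$ for $x \ge 2$, confining it. Setting $b = 4(R' - 1) > 0$ so that $4(x - 1) \le b$ throughout $[0, R']$, I introduce the spatially constant comparison function
\begin{equation*}
  v(t) = -\frac{b}{2\,(1 - e^{-bt})},
\end{equation*}
which solves the Riccati ODE $\dot v = 2 v^2 + b v$ with $v(0^+) = -\infty$. A direct check shows $v$ is negative, strictly increasing in $t$, tends to $-\infty$ as $t \to 0^+$ and to $-b/2$ as $t \to \infty$; its $1 - e^{-bt}$ structure is exactly what yields the weighted $\mathrm{BV}$ bound~\eqref{eqnNormNBV} (since $b \ge 4 > 1$, the product $(1 - e^{-t})\,|v(t)|$ stays bounded). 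I would take $\underline m(t, R) = v(t)$.

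The heart of the argument is a parabolic comparison. Writing $z = w - v$ and using $\partial_x v = \partial_{xx} v = 0$, the two equations above combine (after substituting $w = z + v$ in the cross term) into
\begin{equation*}
  \partial_t z + F_n\, \partial_x z - \varepsilon\, \partial_{xx} z
    = 2(w + v) z + 4(x-1) z + \bigl(4(x - 1) - b\bigr) v + 2 n .
\end{equation*}
Since $4(x - 1) - b \le 0$ on $[0, R']$ and $v < 0$, the term $\bigl(4(x-1) - b\bigr) v$ is $\ge 0$, and $2n \ge 0$, so
\begin{equation*}
  \partial_t z + F_n\, \partial_x z - \varepsilon\, \partial_{xx} z \ge c(t, x)\, z,
  \qquad c = 2(w + v) + 4(x - 1),
\end{equation*}
with $c$ bounded above on $[0, T] \times [0, R']$ for each fixed $\varepsilon$. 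Thus $z$ is a supersolution of a linear parabolic operator, and on the parabolic boundary $z \ge 0$: as $t \to 0^+$ we have $v \to -\infty$, so $z \to +\infty$; as $x \to \infty$, $w \to 0$ and $z \to -v > 0$; and the Neumann condition gives $w^\varepsilon(t,0) = 0 \ge v(t)$, i.e.\ $z(t,0) \ge 0$. The minimum principle (after the standard $e^{-Ct}$ reduction absorbing the bound on $c$) yields $z \ge 0$, hence $w^\varepsilon(t, x) \ge v(t)$; note this conclusion is $\varepsilon$-independent even though $C = \sup c$ may depend on $\varepsilon$. Integrating in $x$ from $x$ to $y$ and letting $\varepsilon \to 0$ gives $n(t,y) - n(t,x) \ge v(t)(y-x)$, which is~\eqref{eqnSupSol2} with $\underline m(t, R) = v(t)$.

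The main obstacle is turning this formal computation into a rigorous statement for genuine entropy solutions. The delicate points are: justifying the differentiated equation and the minimum principle at the degenerate outflow boundary $x = 0$ (where no boundary condition is imposed on the entropy solution itself); confirming that the support is confined to $[0, R']$ for the viscous approximations, which have exponentially small rather than compactly supported tails; and reconciling the vanishing-viscosity limit with the Kruzkov entropy formulation deferred to Section~\ref{sxnExistence}. Each of these is driven by the concavity/Riccati structure, but the bookkeeping in the passage $\varepsilon \to 0$ is where the real work lies.
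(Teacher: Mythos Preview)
Your approach is essentially the paper's: pass to the vanishing-viscosity approximation, differentiate in $x$, and trap $\partial_x n^\varepsilon$ from below by a spatially constant solution of the associated Riccati ODE (driven by the $2w^2$ term coming from $F_{nn}=-2$), then send $\varepsilon\to0$. Your derivation of the $w$-equation and the Riccati subsolution $v(t)=-b/(2(1-e^{-bt}))$ is correct.

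The one substantive difference is the choice of domain for the viscous problem, and this is exactly where your self-identified ``main obstacle'' gets resolved in the paper. You work on $[0,\infty)$ and impose $\partial_x n^\varepsilon(t,0)=0$, which indeed makes the boundary comparison for $z$ trivial but leaves you having to argue that this particular half-line approximation converges to the entropy solution (which the paper constructs differently) and that the Neumann condition is the right one at an outflow boundary. The paper instead extends the flux smoothly to all of $\R$ (setting $\bar F(x,n)=g(x)n-n^2$ with $g$ equal to $2x-x^2$ on $[0,R]$ and frozen constant outside $[-1,2R]$) and runs the viscous approximation on the whole line with no boundary at all; this is also the approximation used in the existence proof, so convergence is already in hand. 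The price is that $g''$ is no longer identically $-2$, so the helpful term $-g''n_\varepsilon$ becomes $+\,C_\varepsilon\defeq\sup(g''n_\varepsilon)$, and one must show $C_\varepsilon\to0$ using that $n_\varepsilon\to0$ uniformly outside $[-\delta,R+\delta]$ (proved separately via a barrier). Their resulting Riccati is $\dot{\underline m}=2\underline m^2+C'\underline m-C_\varepsilon$ with $C'=2\|g'\|_\infty$, slightly more involved than yours but free of boundary bookkeeping.

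In short: same mechanism, different packaging. Your version gives a cleaner Riccati but defers a genuine difficulty to the $x=0$ boundary; the paper's whole-line extension trades that difficulty for a mild $C_\varepsilon$ term and an auxiliary barrier argument.
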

  \begin{lemma}\label{lmabvbound}
    Let $n$ be a non-negative entropy solution to~\eqref{eqnSKomp}, with initial data supported on $[0,R]$ for some $R> 0$.
    Then~\eqref{eqnNormNBV} holds,
    and the trajectory $\set{n(t, \cdot)}_{t \geq 0}$ is relatively compact in $L^1$.%
  \end{lemma}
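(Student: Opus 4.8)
The plan is to assemble three standing a priori controls on $n(t,\cdot)$ --- an $L^1$ bound, compact support, and the one-sided Lipschitz bound --- and then combine them into the weighted $\operatorname{BV}$ estimate \eqref{eqnNormNBV} and the compactness claim. First, since $n\ge 0$, the flux satisfies $F(0,n)=-n^2\le 0$ at $x=0$ and vanishes at $x=\infty$, so (cf.\ Proposition~\ref{ppnLoss}) the total number $N[n(t,\cdot)]=\int_0^\infty n(t,x)\dx\le N[n_0]$ is non-increasing, giving $n\in L^\infty([0,\infty),L^1)$. Next, the leading-edge speed $F_n(x,0)=2x-x^2$ is negative for $x>2$, so no mass is transported rightward past $\max(2,R)$; enlarging $R$ if necessary we may assume the (preserved) support of $n(t,\cdot)$ lies in $[0,R]$ for every $t\ge0$, and in particular $n(t,R^-)=0$. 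Finally, Lemma~\ref{lmalipbound} supplies the one-sided Lipschitz bound \eqref{eqnSupSol2}.

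For the BV estimate, fix $t>0$, write $m=\underline m(t,R)<0$, and set $w(x)=n(t,x)-mx$. By \eqref{eqnSupSol2}, for $0\le x\le y\le R$ we have $w(y)-w(x)=\bigl(n(t,y)-n(t,x)\bigr)-m(y-x)\ge 0$, so $w$ is non-decreasing on $[0,R]$. Hence
\[
  \operatorname{TV}_{[0,R]}(n(t,\cdot)) \le \operatorname{TV}(w)+\operatorname{TV}(mx) = \bigl(w(R^-)-w(0^+)\bigr)+|m|R = -n(t,0^+)+2|m|R \le 2R\,|\underline m(t,R)|,
\]
where we used $n(t,R^-)=0$ and $n(t,0^+)\ge 0$; since $n$ vanishes on $(R,\infty)$ this also bounds the variation on all of $[0,\infty)$. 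The weight $(1-e^{-t})$ in \eqref{eqnNormNBV} is exactly what tames the blow-up of $\underline m(t,R)$ as $t\to 0^+$: the quantitative form of $\underline m$ produced in the proof of Lemma~\ref{lmalipbound} (by integrating the Riccati-type inequality for $\partial_x n$ along characteristics) satisfies $(1-e^{-t})\,|\underline m(t,R)|\le C(R)$ uniformly in $t$. Together with the $L^1$ bound this yields $\sup_{t\ge0}(1-e^{-t})\,\lVert n(t,\cdot)\rVert_{\operatorname{BV}} \le N[n_0]+2RC(R)<\infty$, which is \eqref{eqnNormNBV}.

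For relative compactness of $\set{n(t,\cdot)}_{t\ge0}$ in $L^1$, fix any $t_0>0$. Since $\underline m(\cdot,R)$ is increasing (Lemma~\ref{lmalipbound}), $|\underline m(t,R)|\le|\underline m(t_0,R)|$ for $t\ge t_0$, so by the previous paragraph $\set{n(t,\cdot)}_{t\ge t_0}$ is bounded in $\operatorname{BV}([0,R])$, uniformly compactly supported, and bounded in $L^1$; the compact embedding $\operatorname{BV}([0,R])\hookrightarrow L^1$ (Helly's selection theorem) makes it relatively compact in $L^1$. For the complementary times, $t\mapsto n(t,\cdot)$ is continuous from $[0,t_0]$ into $L^1$ (the $L^1$-contraction/semigroup property of entropy solutions, cf.\ Section~\ref{sxnExistence}), so $\set{n(t,\cdot)}_{0\le t\le t_0}$ is the continuous image of a compact set and hence compact in $L^1$. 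The union is relatively compact, proving the claim.

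The main obstacle is twofold: verifying that the weight $(1-e^{-t})$ exactly matches the $t\to 0^+$ blow-up rate of the one-sided Lipschitz constant (so that one genuinely needs the quantitative form of $\underline m$ from Lemma~\ref{lmalipbound}, not merely its sign and monotonicity), and confirming the finite-propagation localization of the support in $[0,R]$ so that $n(t,R^-)=0$ may be used to close the variation estimate. Beyond these points, the argument is the standard BV-compactness machinery for scalar conservation laws.
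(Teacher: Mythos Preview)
Your proof is correct and follows essentially the same route as the paper: the $L^1$ bound from mass non-increase, support preservation, and the decomposition $n(t,x)=(n(t,x)-\underline m\,x)+\underline m\,x$ into monotone pieces to get $\operatorname{TV}\le 2R|\underline m(t,R)|$, followed by Helly. You are in fact more careful than the paper in two places the paper glosses over---verifying explicitly that $(1-e^{-t})|\underline m(t,R)|$ stays bounded (which indeed follows from the explicit Riccati-type formula \eqref{mdef}), and handling the compactness for $t\in[0,t_0]$ via $L^1$-continuity of the semigroup rather than BV---both of which are welcome; the one inaccuracy is that the paper obtains $\underline m$ via viscous approximation rather than characteristics, but this does not affect your argument since you only use the resulting formula.
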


  The main idea behind the bounds~\eqref{eqnNooBound} and~\eqref{eqnSupSol2} is the construction of appropriate super-solutions.
  Once the bounds~\eqref{eqnNooBound} and~\eqref{eqnSupSol2} are established, the BV bound and $L^1$ compactness follow by relatively standard methods.
  For clarity of presentation we defer the proof of Lemmas~\ref{lmaupperbound}-\ref{lmabvbound} to Section~\ref{sxnCompactness}.

  \subsection{An \texorpdfstring{$L^1$}{L1} contraction estimate, and uniqueness}

  The next step in proving~\eqref{eqnNtoNAlpha} is to show that the $L^1$-distance between $n$ and every $n_\alpha$ cannot increase with time.
  We do this by proving a $L^1$ contraction estimate
  which also takes into account inflow and outflow flux.
  \begin{lemma}\label{lmaL1contract}
    Let $n$ and $n'$ be two non-negative bounded entropy solutions of~\eqref{eqnSKomp} with $L^1$ initial data.
    For any $R>2$, we have
    \begin{multline}\label{eqnL1Contract}
      \int^R_0 \abs{n(T,x)-n'(T,x)} \, dx
      + \int^T_0 \abs{ n(t,0)^2 - n'(t,0)^2 }\dt\\
      \leq\int^R_0\vert n(0,x)-n'(0,x)\vert\dx
      +\int^T_0 \abs{F(R,n(t,R))-F(R,n'(t,R))} \dt
    \end{multline}
  \end{lemma}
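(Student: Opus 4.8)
The plan is to run a Kruzhkov doubling-of-variables argument adapted to the position-dependent flux~$F$, and then integrate the resulting inequality over the rectangle $[0,T]\times[0,R]$, reading off the boundary contributions at $x=0$ and $x=R$. First recall that for an entropy solution, every constant $k$ yields the Kruzhkov inequality
\[
  \partial_t \abs{n-k} + \partial_x\brak[\big]{\sgn(n-k)\paren{F(x,n)-F(x,k)}} + \sgn(n-k)\,(2-2x)k \leq 0
\]
in the sense of distributions, where $(2-2x)k$ is the partial derivative of $F$ in its explicit spatial slot evaluated at $n=k$. Doubling the variables --- writing one copy $n=n(t,x)$ and the other $n'=n'(s,y)$, freezing $k=n'$ in the inequality for $n$ and $k=n$ in the inequality for $n'$, and localizing with a mollified test function that concentrates on the diagonal $s=t$, $y=x$ --- produces in the limit the \emph{source-free} inequality
\[
  \partial_t \abs{n-n'} + \partial_x\brak[\big]{\sgn(n-n')\paren{F(x,n)-F(x,n')}} \leq 0
  \quad\text{in } \mathcal{D}'\paren{(0,\infty)^2}.
\]

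The vanishing of the spatial source is the one genuinely delicate point, and I would verify it carefully. The frozen-coefficient source terms $\sgn(n-n')\brak{(2-2x)n'-(2-2x)n}$ coming from the two copies are exactly cancelled, in the diagonal limit, by the $O(\abs{x-y})$ mismatch between $F(x,\cdot)$ and $F(y,\cdot)$ in the two entropy-flux terms; this mismatch contributes in the limit because it is multiplied by a difference quotient of the mollifier. For the smooth flux at hand this mirrors the exact classical identity obtained by subtracting the two conservation laws and multiplying by $\sgn(n-n')$, in which the explicit and transport contributions of the spatial variable cancel algebraically, leaving no residual source.

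Next I would integrate the source-free inequality over $[0,T]\times[0,R]$. Concretely, one tests it against functions of product form $\theta(t)\chi(x)$ approximating $\mathbbm{1}_{[0,T]}\mathbbm{1}_{[0,R]}$ and passes to the limit. The time-slice terms converge to $\int_0^R\abs{n(T,\cdot)-n'(T,\cdot)}\dx$ and $\int_0^R\abs{n(0,\cdot)-n'(0,\cdot)}\dx$ using the $L^1$-in-time continuity of entropy solutions, while the $x=0$ and $x=R$ terms converge to boundary traces of the entropy flux $\Phi(t,x)\defeq\sgn(n-n')\paren{F(x,n)-F(x,n')}$. These traces exist because Lemma~\ref{lmabvbound} provides $n,n'\in\operatorname{BV}$ for $t>0$, so the lateral traces $n(t,0)$, $n(t,R)$ (and those of $n'$) are well defined and the flux trace equals the flux of the trace. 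The outcome is the divergence-form estimate
\[
  \int_0^R \abs{n(T,x)-n'(T,x)}\dx - \int_0^T \Phi(t,0)\dt
  \leq \int_0^R \abs{n(0,x)-n'(0,x)}\dx - \int_0^T \Phi(t,R)\dt .
\]

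Finally I would evaluate the two boundary terms. At $x=0$ the flux degenerates to $F(0,m)=-m^2$, so, using $n,n'\geq 0$,
\[
  -\Phi(t,0) = \abs{n(t,0)-n'(t,0)}\,\paren{n(t,0)+n'(t,0)} = \abs{n(t,0)^2-n'(t,0)^2},
\]
which is precisely the outflow term on the left of~\eqref{eqnL1Contract}. At $x=R$ I simply discard the sign, $-\Phi(t,R)\leq\abs{F(R,n(t,R))-F(R,n'(t,R))}$, producing the inflow term on the right. Substituting these two facts into the divergence-form estimate yields~\eqref{eqnL1Contract}. The principal obstacle is thus the rigorous justification of the source-free inequality for the position-dependent flux, together with the existence and correct identification of the flux traces at $x=0$ and $x=R$; everything afterward is bookkeeping with the explicit form of~$F$.
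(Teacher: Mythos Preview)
Your proposal is correct and follows essentially the same route as the paper: the doubling-of-variables step yielding the source-free inequality is packaged there as Lemma~\ref{lmacontractcompare}, the integration against a cutoff approximating $\mathbbm{1}_{[0,T]\times[0,R]}$ is Lemma~\ref{lmaPhiineq}, and the paper then specializes to $\Psi(s)=|s|$ and uses non-negativity at $x=0$ exactly as you do. One small caution: invoking Lemma~\ref{lmabvbound} to justify the lateral traces risks circularity, since that lemma sits downstream of the contraction machinery; the paper handles the trace passage informally, and if you want to be rigorous you can obtain the needed BV regularity independently from the one-sided Lipschitz bound (Lemma~\ref{lmalipbound}) together with Proposition~\ref{ppnLoss}, neither of which relies on Lemma~\ref{lmaL1contract}.
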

  Namely, the $L^1$ distance from $0$ to $R$ between two solutions, as well as between the outgoing fluxes at $x=0$, is controlled by the initial $L^1$ distance between the data and between the incoming fluxes at $x=R$.  We will later show that the incoming flux at $R$ vanishes, provided $R$ is greater than any value in the support of $n_0$.

  We also remark that $L^1$-contraction estimates 
  are well-known in the case when the flux $F$ is independent of position (see for instance~\cite{Bressan}).
  However, for a position dependent flux, this estimate is not easily found in the literature, and for completeness we present a proof in Section~\ref{sxnProofsOfLemmas}.

  As remarked above, in order to use Lemma~\ref{lmaL1contract}, we need to show that the incoming flux from the right vanishes at any point beyond the support of $n_0$.
  \begin{lemma}\label{lmaSupport}
    If $R \geq 2$ and $n$ is a non-negative entropy solution to~\eqref{eqnSKomp} with $L^1$ initial data that is supported in $[0, R]$,
    then for all $t \geq 0$ the function $n(t, \cdot)$ is also supported in $[0, R]$.
  \end{lemma}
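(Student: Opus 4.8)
The plan is to integrate the conservation law $\partial_t n + \partial_x F = 0$ over the half-line $(R, \infty)$ and exploit the sign of the flux there. Since the initial data vanishes on $(R, \infty)$, the total mass to the right of $R$ at any later time is governed entirely by the flux through $x = R$ together with the flux at infinity. Concretely, I would use the weak formulation of~\eqref{eqnSKomp} and the boundary condition~\eqref{eqnSKompBC} to obtain, for every $T > 0$,
\begin{equation*}
  \int_R^\infty n(T, x) \, dx = \int_0^T F\bigl(R, n(t, R^+)\bigr) \, dt,
\end{equation*}
where $n(t, R^+)$ denotes the trace of $n$ from the right at $x = R$.

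The key observation is the sign of the flux for $x \geq 2$. Since $F(x, n) = (2x - x^2) n - n^2$ and $2R - R^2 = R(2 - R) \leq 0$ whenever $R \geq 2$, we have $F(R, n) \leq 0$ for every $n \geq 0$. Hence the right-hand side of the displayed identity is non-positive, while the left-hand side is non-negative because $n \geq 0$. Both must therefore vanish for every $T$, which forces $n(T, x) = 0$ for a.e.\ $x > R$ and all $T \geq 0$; this is exactly the assertion that $n(t, \cdot)$ remains supported in $[0, R]$. Note that this argument uses only that $n$ is a non-negative weak solution together with the sign of $F$, so no part of the entropy machinery beyond existence of traces is needed.

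To make the integrated identity rigorous I would proceed as follows. The trace $n(t, R^+)$ exists for a.e.\ $t$ because $n(t, \cdot)$ is of bounded variation for each $t > 0$ by~\eqref{eqnNormNBV} (Lemma~\ref{lmabvbound}), so one-sided traces are well defined. Testing the weak form of~\eqref{eqnSKomp} against functions approximating the indicator of the rectangle $(R, R') \times (0, T)$ yields the local balance on $(R, R')$, with boundary-flux contributions at $R$ and at $R'$; letting $R' \to \infty$ and invoking the vanishing-flux condition~\eqref{eqnSKompBC} (together with the uniform $L^1$ and $L^\infty$ bounds of~\eqref{eqnNormNBV} to justify passage to the limit) removes the contribution at infinity and produces the identity above.

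The main obstacle is precisely this boundary term at infinity: one must ensure that $\int_0^T F(R', n(t, R'^-)) \, dt \to 0$ as $R' \to \infty$. This is where~\eqref{eqnSKompBC} is essential, for without it mass could in principle enter the region from the right — indeed $F(x, \cdot) \leq 0$ for large $x$ means the flux there is directed leftward, i.e.\ inward into $(R, \infty)$. Controlling this limit, rather than the elementary sign computation $F(R, \cdot) \leq 0$, is the only genuinely technical point; everything else reduces to conservation of mass and the algebra of the flux.
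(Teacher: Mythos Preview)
Your strategy is exactly the paper's: integrate over the region $\{x>R\}$, kill the far-right contribution with the boundary condition~\eqref{eqnSKompBC}, and use that $F(R,n)=(2R-R^2)n-n^2\le 0$ for $R\ge 2$ and $n\ge 0$ to force $\int_R^\infty n(T,x)\,dx\le 0$. The paper packages this via Lemma~\ref{lmaPhiineq} (with $n'\equiv 0$ and the left endpoint of the domain taken to be $s(t)\equiv R$), which yields the inequality
\[
\int_R^\infty n(T,x)\,dx \;-\; \int_0^T n(t,R)\bigl(2R-R^2-n(t,R)\bigr)\,dt \;\le\; 0,
\]
from which the conclusion is immediate.

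There is, however, a circularity in your justification of traces. You invoke Lemma~\ref{lmabvbound} (the BV bound~\eqref{eqnNormNBV}) to guarantee that $n(t,R^+)$ makes sense. But in the paper the proof of Lemma~\ref{lmabvbound} \emph{uses} Lemma~\ref{lmaSupport} to restrict attention to $[0,R]$; so you cannot appeal to it here. This is precisely why the paper routes the argument through the Kruzkov machinery of Lemma~\ref{lmaPhiineq}: the doubling-of-variables derivation produces the boundary flux terms directly from the entropy inequality and the approximate-indicator test functions, without first establishing BV regularity. Once you replace your appeal to~\eqref{eqnNormNBV} by that derivation (or by an independent trace result for entropy solutions), your proof coincides with the paper's. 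Note also that only the inequality $\int_R^\infty n(T,x)\,dx\le \int_0^T F(R,n(t,R))\,dt$ is needed, and that is what the entropy route gives; the exact identity you wrote is stronger than required.
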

  \begin{remark*}
    In fact, support of $n(t, \cdot)$ shrinks to the interval $[0, 2]$ as $t \to \infty$, as shown at the end of the proof of Lemma~\ref{lmaSupport}.  
  \end{remark*}
  The proof of Lemma~\ref{lmaSupport} is deferred to Section~\ref{sxnProofsOfLemmas}.
  We note, however, that Lemmas~\ref{lmaL1contract} and~\ref{lmaSupport} immediately yield uniqueness of entropy solutions to~\eqref{eqnSKomp}.
  \begin{proposition}[Uniqueness]\label{ppnUniqueness}
    If $n_0 \in L^1$ is compactly supported and non-ne\-ga\-tive, there is at most one non-negative entropy solution to~\eqref{eqnSKomp} with initial data~$n_0$.
  \end{proposition}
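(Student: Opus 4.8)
The plan is to deduce uniqueness directly from the two preceding lemmas, exactly as advertised: the $L^1$-contraction estimate of Lemma~\ref{lmaL1contract} will show that two entropy solutions with the same data must coincide, once we verify that \emph{both} error terms on the right-hand side of~\eqref{eqnL1Contract} vanish. So suppose $n$ and $n'$ are two non-negative entropy solutions of~\eqref{eqnSKomp} with the same compactly supported initial data $n_0 \in L^1$, say with $\operatorname{supp} n_0 \subseteq [0, R_0]$.

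First I would fix the interval on which to work. Set $\tilde R = \max(R_0, 2)$, so that $n_0$ is supported in $[0, \tilde R]$ and $\tilde R \geq 2$. By Lemma~\ref{lmaSupport}, both $n(t, \cdot)$ and $n'(t, \cdot)$ then remain supported in $[0, \tilde R]$ for every $t \geq 0$. Choosing any $R > \tilde R$ (so in particular $R > 2$, as required by Lemma~\ref{lmaL1contract}) we have $n(t, R) = n'(t, R) = 0$ for all $t$, and hence $F(R, n(t, R)) = F(R, n'(t, R)) = F(R, 0) = 0$; thus the incoming-flux term in~\eqref{eqnL1Contract} vanishes identically. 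Before applying the estimate I would also record that both solutions are bounded: by Lemma~\ref{lmabvbound} each $n(t, \cdot)$ lies in $\operatorname{BV}([0, \infty))$ for $t > 0$ and is supported in the fixed bounded interval $[0, \tilde R]$, hence belongs to $L^\infty$, so the hypotheses of Lemma~\ref{lmaL1contract} are met.

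With these facts in hand I would apply Lemma~\ref{lmaL1contract} to $n$ and $n'$ on $[0, R]$. Since the two solutions share the same initial data, the first term on the right of~\eqref{eqnL1Contract} is $\int_0^R \abs{n_0 - n_0} \dx = 0$, and we have just seen that the flux term at $x = R$ also vanishes. The estimate therefore reduces to
\begin{equation*}
  \int_0^R \abs{n(T, x) - n'(T, x)} \dx + \int_0^T \abs{n(t, 0)^2 - n'(t, 0)^2} \dt \leq 0.
\end{equation*}
Both summands are non-negative, so each must be zero; in particular $\int_0^R \abs{n(T, x) - n'(T, x)} \dx = 0$ for every $T \geq 0$. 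Since both solutions vanish outside $[0, \tilde R] \subseteq [0, R]$, this forces $n(T, \cdot) = n'(T, \cdot)$ almost everywhere on $[0, \infty)$ for all $T$, that is, $n = n'$.

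Finally, a word on where the difficulty lies. At the level of this proposition there is essentially no obstacle: the argument is a bookkeeping combination of Lemmas~\ref{lmaL1contract} and~\ref{lmaSupport}. The only points demanding care are purely technical, namely confirming that the solutions are genuinely bounded (so that Lemma~\ref{lmaL1contract} applies) and that the right boundary can be pushed beyond the support so the incoming flux is \emph{exactly} zero rather than merely small. The substantive work has been deferred to those two lemmas, and in particular to establishing the position-dependent $L^1$-contraction~\eqref{eqnL1Contract}, which is the genuine ingredient behind uniqueness.
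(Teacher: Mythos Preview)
Your proof is correct and follows essentially the same route as the paper's: apply Lemma~\ref{lmaL1contract} with identical initial data and use Lemma~\ref{lmaSupport} to kill the flux term at $x=R$, forcing the nonnegative left-hand side of~\eqref{eqnL1Contract} to vanish. You are simply more explicit than the paper in checking the side conditions (choosing $R>\max(R_0,2)$ and noting boundedness), which is fine.
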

  \begin{proof}
    Let $n$ and $n'$ be two entropy solutions of~\eqref{eqnSKomp} with compactly supported $L^1$ initial data such that $n_0 = n_0'$.
    Using~\eqref{eqnL1Contract} we see
    \begin{equation*}
      \int_0^R \abs{n(T, x) - n'(T, x)} \, dx
      \leq -\int_0^T \abs{n(t, 0)^2 - n'(t, 0)^2}\, dt.
    \end{equation*}
    This is only possible if the right hand side vanishes, and hence $n = n'$ identically, proving uniqueness.
  \end{proof}

  \subsection{Proofs of the main results}\label{sxnMainProof}
  
  Using the above, we now prove the results stated in Section~\ref{sxnIntro}.
  We begin with the main theorem.

  \begin{proof}[Proof of Theorem~\ref{thmMain}]
    Uniqueness of non-negative entropy solutions was proved in Proposition~\ref{ppnUniqueness} above.
    For clarity of presentation, we address the existence of entropy solutions in Proposition~\ref{ppnExistence} in Section~\ref{sxnExistence} below.

    We next prove part~(\ref{thm1part1}) of the theorem. 
    First by Lemma~\ref{lmabvbound} we know $\set{n(t, \cdot)}_{t \geq 0}$ is relatively compact in $L^1$.
    Thus, to show that $n(t, \cdot)$ converges in $L^1$ as $t \to \infty$, it is enough to show that subsequential limits are unique.
    For this, let $(t_k)$ be a sequence of times $t_k \to \infty$ and $n_\infty \in L^1$ be such that $n(t_k, \cdot) \to n_\infty(\cdot)$ in $L^1$.
    We claim that $n_\infty$ is independent of the sequence $(t_k)$.
    Indeed, let
    \begin{equation*}
        C_\beta(t) = \int_0^\infty \abs{n(t, x) - \hat n_\beta(x)} \, dx,\qquad \beta\in[0,2].
    \end{equation*}
    For any $r < t$, Lemmas~\ref{lmaL1contract} and~\ref{lmaSupport} imply
    \begin{multline*}
      \int^R_0 \abs{n(t,x)-\hat n_\beta (x)} \, dx
      \leq
	\int^R_0 \abs{ n(r,x)-\hat n_\beta(x) } \, dx
	- \int_r^t \abs{n(s,0)^2- \hat n_\beta(0)^2 } \, ds
      \\
	= \int^R_0 \abs{ n(r,x)-\hat n_\beta(x) } \, dx - \int_r^t n(s, 0)^2 \, ds
    \end{multline*}
    and hence
    \begin{equation}\label{eqnCBetaDec}
      C_\beta(t) \leq C_\beta(r).
    \end{equation}
    Thus $C_\beta(t)$ must converge as $t \to \infty$, and we define
    \begin{equation*}
      \bar C_\beta = \lim_{t \to \infty} C_\beta(t).
    \end{equation*}
    By assumption, since $n(t_k, \cdot) \to n_\infty(\cdot)$ in $L^1$ we must also have
    \begin{equation*}
      \bar C_\beta = \int_0^\infty \abs{n_\infty - \hat n_\beta} \, dx.
    \end{equation*}
    Of course, $\bar C_\beta$ is independent of the sequence $(t_k)$, and so if we show that $n_\infty$ can be uniquely determined from the constants $\bar C_\beta$ we will obtain uniqueness of subsequential limits.

    To recover $n_\infty$ from $\bar C_\beta$, note that Lemma~\ref{lmaupperbound} implies $n_\infty(x) \leq \hat n_0(x)$ and hence
    \begin{equation*}
      n_\infty(x) \leq \hat n_\beta(x)
      \quad
      \text{for all } x > \beta \text{ and } \beta\in (0, 2].
    \end{equation*}
    Thus
    \begin{equation*}
      \bar C_\beta = \int_0^\beta n_\infty + \int_\beta^2 (\hat n_0 - n_\infty) \, dx,
    \end{equation*}
    and hence, for a.e.\ $\beta\in(0,2)$,
    \begin{equation*}
      2 n_\infty(\beta) = \hat n_0(\beta) + \partial_\beta \bar C_\beta ,
    \end{equation*}
    showing $n_\infty$ can be uniquely recovered from $\bar C_\beta$.
    This shows that $(n(t , \cdot)) \to n_\infty(\cdot)$ in $L^1$ as $t \to \infty$.
   We note that $n(t,x)$ is uniformly bounded for $t>t_0$ for each given $t_0>0$ as shown in the proof of Lemma~\ref{lmaupperbound} in Section~\ref{sxnupperboundproof}.  Using the $L^1$ convergence and standard dominated convergence arguments, we can pass to the limit $t\to\infty$ through the integrals in~\eqref{eqnweakform} and~\eqref{eqnweakentropy} to show that $n_\infty$ is a stationary entropy solution to~\eqref{eqnSKomp}.  Hence, there is some $\alpha\in(0,2)$ such that $n_\infty=\hat{n}_\alpha$.
    This proves~\eqref{eqnNtoNAlpha}.
    \smallskip

    Part~(\ref{thm1part2}) of the theorem asserts that the total photon number is a non-increasing function of time.
    To prove this observe that the total photon number is given by
    \begin{equation*}
      N[n(\cdot, t)] = \int_0^\infty n(t, x) \, dx
	= \int_0^\infty \abs[\big]{ n(t, x) - \hat n_2(x)} \, dx,
    \end{equation*}
    since $\hat n_2\equiv0$ and $n \geq 0$.
    By the $L^1$ contraction principle (Lemma~\ref{lmaL1contract}) the right hand side is a non-increasing function of time, and hence the same is true for the total photon number, finishing the proof.
  \end{proof}

  The loss formula~\eqref{eqnloss} uses techniques developed in the proof of Lemma~\ref{lmaL1contract} and we defer the proof to Section~\ref{sxnProofsOfLemmas}.
  Instead, we turn to Corollary~\ref{clyFTCondensate} and show that if we start with a total photon number larger than $N[\hat n_0]$, then the Bose-Einstein condensate must form in finite time.

  \begin{proof}[Proof of Corollary~\ref{clyFTCondensate}]%
    Using~\eqref{eqnNtoNAlpha} we see that
    \begin{equation*}
      \lim_{t \to \infty} N[n(t, \cdot)] = N[\hat n_\alpha] \leq N[\hat n_0].
    \end{equation*}
    Consequently if $N[n(0, \cdot)] > N[\hat n_0]$, then at some finite time $T$ we must have
    \begin{equation*}
      N[n(T, \cdot)]
	< \frac{ N[n(0, \cdot)] + N[\hat n_0] }{2}
	< N[n(0, \cdot)]
    \end{equation*}
    as desired.
  \end{proof}

  Finally, we prove Corollary~\ref{clyNTFreeNCond} and show that for any non-zero initial data, the equilibrium solution approached is not identically $0$.
  For this we need a comparison principle.

  \begin{lemma}[Comparison principle]\label{lmaCompare}
    Let $n$ and $n'$ be two non-negative entropy solutions to~\eqref{eqnSKomp} with compactly supported $L^1$ initial data $n_0$ and $n'_0$ respectively.  Then if $n_0(x)\leq n'_0(x)$ on $(0,\infty)$, then $n(t,x)\leq n'(t,x)$ on for any $t>0$, $x \in (0, \infty)$.
  \end{lemma}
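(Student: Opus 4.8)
The plan is to reduce the comparison principle to a one-sided version of the $L^1$ contraction estimate, obtained by repeating the doubling-of-variables argument behind Lemma~\ref{lmaL1contract} with the positive part $(n-n')_+$ in place of $\abs{n-n'}$. To see that the position dependence of the flux introduces no unfavorable source term, it is instructive to first look at smooth solutions: subtracting the two copies of~\eqref{eqnSKomp} and using the factorization
\[
  F(x,n)-F(x,n') = \bigl[(2x-x^2)-(n+n')\bigr]\,(n-n'),
\]
we see that $w\defeq n-n'$ solves the linear transport equation $\partial_t w + \partial_x(\bar a\,w)=0$ with coefficient $\bar a\defeq (2x-x^2)-(n+n')$. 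Multiplying by $\sgn_+(w)$ (where $\sgn_+(r)=1$ for $r>0$ and $\sgn_+(r)=0$ for $r\le 0$) and using $\sgn_+(w)\,w=w_+$ gives $\partial_t w_+ + \partial_x(\bar a\,w_+)=0$, with no zeroth-order term, and $\bar a\,w_+=\sgn_+(n-n')\bigl(F(x,n)-F(x,n')\bigr)$. The rigorous counterpart for entropy solutions, which the doubling-of-variables computation of Lemma~\ref{lmaL1contract} yields verbatim once $\abs{\cdot}$ is replaced by its positive part, is the Kato-type inequality
\[
  \partial_t (n-n')_+ + \partial_x\bigl[\sgn_+(n-n')\bigl(F(x,n)-F(x,n')\bigr)\bigr]\le 0
\]
in the sense of distributions on $x>0$, $t>0$.

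Next I would integrate this inequality in $x$ over $[0,R]$, with $R>2$ chosen larger than any point of the supports of $n_0$ and $n'_0$; set $W(t)\defeq\int_0^R(n-n')_+\,dx$. The flux term contributes the boundary values at $x=R$ and $x=0$. By Lemma~\ref{lmaSupport} both $n(t,\cdot)$ and $n'(t,\cdot)$ vanish in a neighborhood of $x=R$, so the contribution at $R$ is zero. At $x=0$ the flux is $F(0,m)=-m^2$, so
\[
  \sgn_+(n-n')\bigl(F(0,n)-F(0,n')\bigr)\big|_{x=0}
    = -\,(n(t,0)-n'(t,0))_+\,(n(t,0)+n'(t,0))\le 0,
\]
and hence its contribution to the integrated inequality, with the sign coming from the lower endpoint, is nonnegative. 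Discarding this nonnegative outflow term shows that $W$ is non-increasing in $t$.

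Finally, the hypothesis $n_0\le n'_0$ gives $W(0)=\int_0^R(n_0-n'_0)_+\,dx=0$, so $W(t)\le W(0)=0$ for all $t$. Since $W(t)\ge 0$ by construction, we conclude $W(t)=0$, i.e.\ $(n-n')_+=0$ a.e., which is exactly $n(t,x)\le n'(t,x)$ for a.e.\ $x\in(0,\infty)$ and every $t>0$.

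I expect the one technical hurdle to be the rigorous derivation of the one-sided Kato inequality: the doubling-of-variables argument must be run with the semi-Kruzhkov entropies and, crucially, must correctly account for the absence of an imposed boundary condition at the outflow boundary $x=0$, producing there the nonnegative outflow term $(n(t,0)-n'(t,0))_+(n(t,0)+n'(t,0))$ rather than an uncontrolled boundary flux. Since this is precisely the one-sided analogue of the estimate already established for Lemma~\ref{lmaL1contract}, the remaining steps above are routine.
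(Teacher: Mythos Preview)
Your proposal is correct and follows essentially the same approach as the paper: the paper's Lemma~\ref{lmaPhiineq} already carries out the doubling-of-variables argument for the general family $\Psi(s)=a\lvert s\rvert+bs$, so the proof of Lemma~\ref{lmaCompare} simply takes $a=b=\tfrac12$ (i.e., $\Psi(s)=s_+$), applies~\eqref{comparison2}, uses Lemma~\ref{lmaSupport} to kill the flux at $x=R$, and observes that the $x=0$ boundary term has the favorable sign since $F(0,m)=-m^2$ and $n,n'\ge0$. What you describe as the ``one technical hurdle'' is exactly the content of Lemmas~\ref{lmacontractcompare}--\ref{lmaPhiineq}, so there is nothing new to establish.
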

  Relegating the proof of Lemma~\ref{lmaCompare} to Section~\ref{sxnExistence}, we prove Corollary~\ref{clyNTFreeNCond}.
  \begin{proof}[Proof of Corollary~\ref{clyNTFreeNCond}]
  For any $t_0>0$,  let $\underline{n}(t,x)$ be the solution of~\eqref{eqnSKomp}
  with initial data $\underline n(0,x)=n(t_0, x) \varmin \hat n_0(x)$. 
  Then the comparison principle (Lemma~\ref{lmaCompare}) immediately implies
  that for all $t,x\geq0$,
  \begin{equation*}
    \underline{n}(t,x) \leq n(t_0+t,x)\varmin\hat n_0(x).
  \end{equation*}
  Because $\underline{n}(t,0)=0$, as a consequence of Proposition~\ref{ppnLoss}
  the solution $\underline{n}$ conserves total photon number, therefore 
    \begin{equation*}
N[\hat n_\alpha] = \lim_{t\to\infty} N[n(t_0+t,\cdot)] \geq 
N[\underline n(t_0+t,\cdot)] = \int_0^2
\bigl(n(t_0, x) \varmin \hat n_0(x)\bigr)\,dx,
    \end{equation*}
 proving~\eqref{eqnNTFree}.

    It remains to show that the equilibrium solution~$\hat n_\alpha$ is not identically $0$, provided the initial data isn't either.
    For this, observe that if $N[n(t, \cdot)] = N[n_0]$ for all $t$, then $\int_0^2 \hat n_\alpha = N[n_0] > 0$, showing $n_\alpha$ is not identically $0$.
    Alternately, if $N[n(T, \cdot)] < N[n_0]$ at some finite time $T$, then by~\eqref{eqnloss} we must have $n(T', 0) > 0$ for some $T' \leq T$.
    Since the spatial discontinuities of $n$ can only be upward jumps (see Lemma~\ref{lmalipbound}), this forces
    \begin{equation*}
      \int_0^2 \bigl(n(T', x) \varmin \hat n_0(x) \bigr)\, dx > 0,
    \end{equation*}
    and~\eqref{eqnNTFree} now implies $\hat n_\alpha$ is not identically $0$.
  \end{proof}

  The remainder of this paper is devoted to proving Lemmas~\ref{lmaupperbound}--\ref{lmaSupport}, ~\ref{lmaCompare} and Proposition~\ref{ppnLoss}.

   \section{Entropy solutions}\label{sxnExistence}

  In this section, we define the notion of entropy solutions to~\eqref{eqnSKomp}-\eqref{eqnSKompBC} and prove existence as claimed in Theorem~\ref{thmMain}.
  We use the entropy introduced by Kruzkov~\cite{Kruz} which takes the family of convex functionals $\eta_k\defeq\vert n-k\vert$ as the entropies.
  
  \begin{definition}\label{dfnadmissible}
    We say that $n$ is an \textit{entropy  solution} to~\eqref{eqnSKomp}-\eqref{eqnSKompBC} if the following hold:
    \begin{asparaenum}
      \item
	The function $n \in L^1( [0, T] \times [0, \infty) ) \cap L^1( [0, T]; L^2( [0, \infty) ))$ and for each test function $\phi\in C^\infty_c((0,T)\times(0,\infty))$, we have the weak formulation
	\begin{equation}\label{eqnweakform}
	  \int_0^T\int_0^\infty
	    \paren[\big]{
	      n(t,x)\partial_t\phi(t,x)+F(x,n)\partial_x\phi(t,x)
	    }
	    \dx\dt=0.
	\end{equation}
      \item
	For any $k\in\mathbb{R}$ and non-negative test function $\phi\in C^\infty_c((0,T)\times(0,\infty))$, we have the \textit{Kruzkov entropy inequality}
	\begin{multline}\label{eqnweakentropy}
	  \int^T_0\int^\infty_0
	    \brak[\Big]{
	      \vert n(t,x)-k\vert\partial_t\phi+\sgn[n(t,x)-k][F(x,n(t,x))-F(x,k)]\partial_x\phi\\
	    -\sgn[n(t,x)-k]F_x(x,k)\phi}
	  \dx\dt\geq 0.
	\end{multline}
	\item The boundary condition~\eqref{eqnSKompBC} is satisfied in the $L^1$ sense, that is
	\begin{equation}\label{eqnentbc}
	\lim_{R\to\infty}\int^T_0\vert F(R,n(t,R))\vert~dt=0
	\end{equation}
	for any $T>0$.
    \end{asparaenum}
  \end{definition}
  \begin{remark}
    If $n$ is bounded and satisfies \eqref{eqnweakentropy}, then choosing
    \begin{equation*}
      k=\pm\sup_{[0,T]\times [0,\infty)}\vert n(t,x)\vert
    \end{equation*}
    shows that $n$ also satisfies \eqref{eqnweakform}.
  \end{remark}
 
\subsection{Contraction and Comparison}

  In this subsection, we prove a contraction and comparison principle for non-negative, compactly supported entropy solutions to~\eqref{eqnSKomp}.  Lemmas~\ref{lmaSupport} and \ref{lmaCompare} are proved by controlling $\vert n-n'\vert$ and $(n-n')_+$, or more generally $a\vert n-n'\vert+b(n-n')$ for some $a\geq 0$ and $b\in\mathbb{R}$.
  Our first lemma is the key step used to establish this.

\begin{lemma}\label{lmacontractcompare}
Let $a\geq 0, b\in\mathbb{R}$ and define $\Psi(s)\defeq a\vert s\vert+bs$.
Then for any two bounded entropy solutions to~\eqref{eqnSKomp} $n$ and $n'$,
\begin{multline}\label{eqnphidiff}
\int^T_0\int_0^\infty\Psi(n(t,x)-n'(t,x)) \partial_t \phi\\
+\Psi'(n(t,x)-n'(t,x))[F(x,n(t,x))-F(x,n'(t,x))] \partial_x \phi \, dx\, dt
    \geq 0
\end{multline}
    for any non-negative test function $\phi$.
\end{lemma}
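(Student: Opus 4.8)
The plan is to prove~\eqref{eqnphidiff} by Kruzkov's doubling-of-variables technique, adapted to the position-dependent flux $F$. First I would reduce to the case $a=1$, $b=0$. Since $\Psi'(s)=a\sgn(s)+b$, the integrand in~\eqref{eqnphidiff} equals
\begin{multline*}
  a\bigl(\vert n-n'\vert\partial_t\phi + \sgn(n-n')[F(x,n)-F(x,n')]\partial_x\phi\bigr)\\
  + b\bigl((n-n')\partial_t\phi + [F(x,n)-F(x,n')]\partial_x\phi\bigr).
\end{multline*}
Subtracting the weak formulations~\eqref{eqnweakform} for $n$ and $n'$ shows that the $b$-bracket integrates to $0$ against every test function. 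As $a\geq 0$, it therefore suffices to prove~\eqref{eqnphidiff} in the case $\Psi(s)=\vert s\vert$, i.e.\ to establish
\[
  \int^T_0\int_0^\infty \bigl(\vert n-n'\vert\partial_t\phi + \sgn(n-n')[F(x,n)-F(x,n')]\partial_x\phi\bigr)\dx\dt \geq 0
\]
for every non-negative $\phi\in C^\infty_c((0,T)\times(0,\infty))$.

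For the doubling step, I would fix $(s,y)$ and apply the Kruzkov inequality~\eqref{eqnweakentropy} for $n$ with the constant $k=n'(s,y)$, and fix $(t,x)$ and apply it for $n'$ with $k=n(t,x)$, using the doubled test function $\Phi_\epsilon(t,x,s,y)=\phi(t,x)\,\omega_\epsilon(t-s)\,\omega_\epsilon(x-y)$ for an even mollifier $\omega_\epsilon$. Integrating the first inequality over $(s,y)$ and the second over $(t,x)$, adding, and writing $u=n(t,x)$, $v=n'(s,y)$, the identity $\partial_t\omega_\epsilon(t-s)=-\partial_s\omega_\epsilon(t-s)$ makes the mollifier-derivative parts of the time terms cancel, leaving the regular part $\vert u-v\vert\,\partial_t\phi\,\omega_\epsilon\omega_\epsilon$. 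The flux terms combine into
\[
  \sgn(u-v)\bigl\{[F(x,u)-F(x,v)]\partial_x\Phi_\epsilon + [F(y,u)-F(y,v)]\partial_y\Phi_\epsilon\bigr\},
\]
whose regular part contributes $\sgn(u-v)[F(x,u)-F(x,v)]\phi_x\,\omega_\epsilon\omega_\epsilon$, while the mollifier-derivative part survives as a singular piece proportional to $\omega_\epsilon'(x-y)$.

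The crux is the limit $\epsilon\to0$ and a cancellation specific to the position dependence. A Lebesgue-point argument concentrates all regular terms onto the diagonal, producing exactly the desired integrand $\vert n-n'\vert\partial_t\phi+\sgn(n-n')[F(x,n)-F(x,n')]\partial_x\phi$. Two further contributions remain: the singular flux piece
\[
  \sgn(u-v)\bigl\{[F(x,u)-F(y,u)]-[F(x,v)-F(y,v)]\bigr\}\phi\,\omega_\epsilon(t-s)\,\omega_\epsilon'(x-y),
\]
and the two source terms, which sum to $\sgn(u-v)[F_y(y,u)-F_x(x,v)]\Phi_\epsilon$. Writing $F(x,u)-F(y,u)=(x-y)\int_0^1 F_x(y+\theta(x-y),u)\,d\theta$ and using $\int(x-y)\,\omega_\epsilon'(x-y)\,dy=-1$, the singular piece converges to $-\sgn(n-n')[F_x(x,n)-F_x(x,n')]\phi$, which exactly cancels the limit $+\sgn(n-n')[F_x(x,n)-F_x(x,n')]\phi$ of the source terms. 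This cancellation is precisely why the $F_x$ correction appears in~\eqref{eqnweakentropy}, and it finishes the inequality.

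I expect the main obstacle to be making this $\epsilon\to0$ limit rigorous: the singular flux piece is $O(1)$ rather than $o(1)$, so one cannot pass to the limit term by term but must combine it with the source terms and exhibit the cancellation above. The supporting estimates rely on the boundedness of $n$ and $n'$ (so $F$ and $F_x$ are uniformly bounded over the relevant range of arguments), the smoothness of $x\mapsto F(x,\cdot)$ (here a polynomial), and the Lebesgue differentiation theorem to pass mollified products $g(n(t,x),n'(s,y))$ to $g(n(t,x),n'(t,x))$ for continuous $g$. Since $\operatorname{supp}\phi$ is compactly contained in $(0,T)\times(0,\infty)$, for small $\epsilon$ the doubled integrals stay away from both $x=0$ and $x=\infty$, so no boundary terms arise.
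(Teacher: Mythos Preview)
Your proposal is correct and follows the same Kruzkov doubling-of-variables approach as the paper. The only cosmetic differences are that the paper carries the $a$ and $b$ contributions together (rather than reducing to $a=1$, $b=0$ first) and uses the symmetric doubled test function $g_h(t,x,s,y)=\phi\bigl(\tfrac{t+s}{2},\tfrac{x+y}{2}\bigr)\eta_h\bigl(\tfrac{t-s}{2}\bigr)\eta_h\bigl(\tfrac{x-y}{2}\bigr)$, after which it dispatches the vanishing of the remainder terms by citing Kruzkov rather than spelling out the $F_x$ cancellation you describe.
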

\begin{remark}
  For Lemmas~\ref{lmacontractcompare} and~\ref{lmaPhiineq}, the entropy solutions do not need to be non-negative; non-negativity is not needed until Section~\ref{sxnProofsOfLemmas}.
\end{remark}

\begin{proof}

 To begin, we let $n$ and $n'$ be entropy solutions to~\eqref{eqnSKomp}.
   Take a smooth, non-negative function $g(t,x,s,y)$ from $(0,T)\times\mathbb{R}\times(0,T)\times\mathbb{R}$ and consider the weak entropy inequality~\eqref{eqnweakentropy} for $n(t,x)$.
   Fixing $s$ and $y$, we substitute $n'(s,y)$ for $k$ in the generalization of~\eqref{eqnweakentropy} and integrate over $s$ and $y$ to obtain

\begin{equation}\label{eqncombine1}
\begin{split}
\int^T_0\intzi\int^T_0\int^\infty_0&\vert n(t,x)-n'(s,y)\vert \partial_t g\\
&+\sgn[n(t,x)-n'(s,y)][F(x,n(t,x))-F(x,n'(s,y)] \partial_x g\\
&-\sgn[n(t,x)-n'(s,y)]\partial_x F(x,n'(s,y))g\dx\dt\dy\ds\geq 0.
\end{split}
\end{equation}

By repeating the procedure with the entropy solution $n'(s,y)$ with $n(t,x)$ serving the role of $k$, integrating over $t$ and $x$, and adding the result to~\eqref{eqncombine1} and multiplying by $a$, we obtain
\begin{equation}\label{eqncombine2}
\begin{split}
\int_0^T&\intzi\int_0^T\intzi a\vert n(t,x)-n'(s,y)\vert (\partial_t g+\partial_s g)\\
&+a\sgn(n(t,x)-n'(s,y))[F(x,n(t,x))-F(y,n'(s,y))](\partial_x g+\partial_y g)\\
&+a\sgn(n(t,x)-n'(s,y))\Bigl[
  \bigl( F(y,n'(s,y))-F(x,n'(s,y)) \bigr)\partial_x g\\
  &\qquad -\partial_x F(x,n'(s,y))g+(F(y,n(t,x))\\
  &\qquad -F(x,n(t,x)))\partial_y g+\partial_y F(y,n(t,x))g
\Bigr]\dx\dt\dy\ds\geq 0.
\end{split}
\end{equation}

Using $g$ as the test function in the weak formulations~\eqref{eqnweakform} for $n(t,x)$ and $n'(s,y)$, we integrate the weak formulation for $n$ over $s$ and $y$ and integrate the weak formulation for $n'$ over $t$ and $x$, multiply each by $b$, and add them together to obtain
\begin{multline}\label{weakcombine}
\int^T_0\int_0^\infty\int^T_0\int_0^\infty b\big(n(t,x)-n'(s,y)\big)(\partial_t g+\partial_s g)\\+b\big(F(x,n(t,x))-F(y,n'(s,y))\big)(\partial_x g+\partial_y g)~dx~dt~dy~ds=0.
\end{multline}
Adding~\eqref{weakcombine} to~\eqref{eqncombine2} and noting that $\Psi'(s)=a\sgn(s)+b$, we get
\begin{equation}\label{Phicombine}
\begin{split}
\int_0^T & \int_0^\infty\int_0^T\int_0^\infty \Psi(n(t,x)-n'(s,y))(\partial_t g+\partial_s g)\\
&+\Psi'(n(t,x)-n'(s,y))[F(x,n(t,x))-F(y,n'(s,y))](\partial_x g+\partial_y g)\\
&+a\sgn(n(t,x)-n'(s,y))\Big[
  \big( F(y,n'(s,y))-F(x,n'(s,y)) \big)\partial_x g\\
&\qquad-\partial_x F(x,n'(s,y))g+(F(y,n(t,x))\\
  &\qquad -F(x,n(t,x)))\partial_y g+\partial_y F(y,n(t,x))g
\Big]~dx~dt~dy~ds
\\&\defeq I_1+I_2+I_3\geq 0.
\end{split}
\end{equation}

We now take an arbitrary, non-negative test function $\phi(t,x)$ and define a sequence of non-negative test functions $\{g_h\}_{h>0}$ in terms of $\phi$ by
\begin{equation}\label{eqndefineg}
g_h(t,x,s,y) \defeq \phi\left(\frac{t+s}{2},\frac{x+y}{2}\right)\eta_h\left(\frac{t-s}{2}\right)\eta_h\left(\frac{x-y}{2}\right).
\end{equation}
Here $\eta_h$ is the approximate identity defined by
\[\eta_h(x) \defeq \frac{1}{h}\eta\left(\frac{x}{h}\right),\]
where $\eta\in C^\infty_c(\mathbb{R})$ is such that $\eta(x)=0$ for $\vert x\vert\geq 1$ and 
\[
  \int_\mathbb{R}\eta(x)\dx=1.
\]

We note that
\begin{subequations}
\begin{equation*}
(\partial_t g_h + \partial_s g_h)(t,x,s,y)= \partial_t \phi\left(\frac{t+s}{2},\frac{x+y}{2}\right)\eta_h\left(\frac{t-s}{2}\right)\eta_h\left(\frac{x-y}{2}\right)
\end{equation*}
\begin{equation*}
( \partial_x g_h+ \partial_y g_h)(t,x,s,y)= \partial_x \phi \left(\frac{t+s}{2},\frac{x+y}{2}\right)\eta_h\left(\frac{t-s}{2}\right)\eta_h\left(\frac{x-y}{2}\right).
\end{equation*}

\end{subequations}

Plugging this into~\eqref{Phicombine} and taking $h\to 0$, it is clear that $I_1+I_2$ converges to the left side of~\eqref{eqnphidiff}.  The proof that $I_3\to 0$ as $h\to 0$ follows from Taylor expansions and is done at the end of the proof of (3.12) in \cite{Kruz}.
\end{proof}

Next, we refine Lemma~\ref{lmacontractcompare} to control the difference between two solutions on a finite spatial domain.
\begin{lemma}\label{lmaPhiineq}
  Let $a\geq 0, b\in\mathbb{R}$ and define $\Psi(s)\defeq a\vert s\vert+bs$.
  Let $n$ and $n'$ be entropy solutions of~\eqref{eqnSKomp}.
  Then for any $C^1(0,\infty)$ curve $s(t)$,
\begin{equation}\label{curvel1}
  \begin{split}
    \int_0^{s(T)}&\Psi(n(T,x)-n'(T,x))\dx\\
 +\int^T_0&\Psi'(n(t,s(t))-n'(t,s(t)))[F(s(t),n(t,s(t)))-F(s(t),n'(t,s(t)))]\\
&-\dot{s}(t)\Psi(n(t,s(t))-n'(t,s(t)))\dt\\
   \leq\int^R_0&\Psi(n(0,x)-n'(0,x))\dx\\
    &+\int^T_0 \Psi'(n(t,0)-n'(t,0))[F(0,n(t,0))-F(0,n'(t,0))]\dt.
  \end{split}
  \end{equation}
In particular, if $s(t)\equiv R$, then
\begin{equation}\label{comparison2}
  \begin{split}
    \int^R_0\Psi&( n(T,x)-n'(T,x))\, dx
    \leq \int^R_0\Psi(n(0,x)-n'(0,x)) \, dx\\
      &- \int^T_0\Psi'(n(t,R)-n'(t,R))\left[F(R,n(t,R))-F(R,n'(t,R))\right] \, dt\\
      &+ \int^T_0\Psi'(n(t,0)-n'(t,0))\left[F(0,n(t,0))-F(0,n'(t,0))\right] \, dt
  \end{split}
\end{equation}
\end{lemma}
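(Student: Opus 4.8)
The plan is to read Lemma~\ref{lmacontractcompare} as a distributional divergence inequality and then integrate it over the region under the curve $s$ by an approximate divergence theorem. Writing
\begin{equation*}
  \Phi \defeq \Psi(n-n'),
  \qquad
  Q(t,x) \defeq \Psi'(n-n')\brak[\big]{F(x,n)-F(x,n')},
\end{equation*}
the inequality~\eqref{eqnphidiff} says precisely that $\partial_t\Phi+\partial_xQ\leq 0$ in the sense of distributions on $(0,T)\times(0,\infty)$. I would integrate this over the space-time region $\Omega\defeq\set{(t,x) : 0<t<T,\ 0<x<s(t)}$. Formally, the divergence theorem applied to the field $(\Phi,Q)$ together with $\partial_t\Phi+\partial_xQ\leq0$ gives $\oint_{\partial\Omega}(\Phi,Q)\cdot\mathbf n\,d\sigma\leq0$. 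The boundary $\partial\Omega$ has four pieces: the top $\set{t=T}$ (outward normal $(1,0)$), the bottom $\set{t=0}$ (normal $(-1,0)$), the wall $\set{x=0}$ (normal $(0,-1)$), and the graph $\set{x=s(t)}$, whose outward normal is parallel to $(-\dot s(t),1)$ with arclength element $\sqrt{1+\dot s(t)^2}\,dt$. Reading off the four contributions, the $\sqrt{1+\dot s^2}$ factors cancel on the graph and leave exactly $Q(t,s(t))-\dot s(t)\Phi(t,s(t))$; collecting the top and graph terms on the left and the bottom and wall terms on the right yields~\eqref{curvel1}, with $R=s(0)$.

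To turn this into a rigorous argument I would test~\eqref{eqnphidiff} against a family of non-negative $\phi_\epsilon\in C^\infty_c((0,T)\times(0,\infty))$ increasing to $\mathbbm 1_\Omega$, of the product form
\begin{equation*}
  \phi_\epsilon(t,x)=\zeta_\epsilon(t)\,\beta_\epsilon(x)\,\gamma_\epsilon\paren[\big]{s(t)-x},
\end{equation*}
where $\zeta_\epsilon$ is a smooth temporal cutoff rising on $[0,\epsilon]$ and falling on $[T-\epsilon,T]$, and $\beta_\epsilon,\gamma_\epsilon$ are smooth one-sided mollifications of $\mathbbm 1_{\set{\,\cdot\,>0}}$. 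Differentiating, $\beta_\epsilon'$ produces an approximate $\delta$ on the wall $x=0$, while $\gamma_\epsilon'$ produces approximate $\delta$'s along the graph $x=s(t)$ ($\gamma_\epsilon'$ appears in $\partial_x\phi_\epsilon$ with weight $-1$ and in $\partial_t\phi_\epsilon$ with weight $+\dot s(t)$, the latter producing the $\dot s\,\Phi$ contribution along the graph), and $\zeta_\epsilon'$ produces the temporal layers at $t=0$ and $t=T$. Keeping track of the signs of $\beta_\epsilon',\gamma_\epsilon',\zeta_\epsilon'$, the limit $\epsilon\to0$ converts each layer into the corresponding boundary integral in~\eqref{curvel1}.

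The step I expect to be the main obstacle is justifying that these concentrating integrals converge to genuine boundary (trace) integrals rather than merely to averages. This is exactly where the regularity from the compactness lemmas enters: by Lemma~\ref{lmabvbound} (equivalently, the one-sided Lipschitz bound of Lemma~\ref{lmalipbound}) each $n(t,\cdot)$ is of bounded variation in $x$ for $t>0$, so it has well-defined one-sided traces at $x=0$ and along the $C^1$ graph $x=s(t)$; hence the $\beta_\epsilon'$ and $\gamma_\epsilon'$ integrals converge to the trace integrals $\int_0^T Q(t,0)\,dt$, $\int_0^T Q(t,s(t))\,dt$, and $\int_0^T \dot s(t)\Phi(t,s(t))\,dt$. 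For the temporal endpoints I would first integrate over $t\in[\tau_1,\tau_2]$ with $0<\tau_1<\tau_2<T$ and then send $\tau_1\to0$, $\tau_2\to T$, using $L^1$-continuity in time to identify the top and bottom terms with $\int_0^{s(T)}\Phi(T,\cdot)\,dx$ and $\int_0^{s(0)}\Phi(0,\cdot)\,dx$. Finally, specializing to the constant curve $s(t)\equiv R$ kills the $\dot s$ term and, after moving the graph flux to the right-hand side, gives~\eqref{comparison2} immediately.
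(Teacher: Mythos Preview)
Your approach is essentially the paper's: both plug into~\eqref{eqnphidiff} a smooth approximation of $\mathbbm{1}_\Omega$ built as a product of cutoffs in $t$, near $x=0$, and along $x=s(t)$, then send the regularization parameter to zero. The paper's test function is
\[
  \phi(t,x)=\bigl[\alpha_h(t-\rho)-\alpha_h(t-\tau)\bigr]\bigl[\alpha_h(x-\eps)-\alpha_h(x-s(t)+\eps)\bigr],
\]
which is exactly your $\zeta_\epsilon\beta_\epsilon\gamma_\epsilon$ in different packaging, and the limiting computation of the boundary terms is the same.

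Where your write-up departs from the paper is in the justification of the boundary traces: you invoke Lemmas~\ref{lmalipbound} and~\ref{lmabvbound} to guarantee BV regularity and hence well-defined one-sided limits. This introduces a logical loop, since the proof of Lemma~\ref{lmabvbound} explicitly cites Lemma~\ref{lmaPhiineq} (with $n'=0$) to bound $\|n(t,\cdot)\|_{L^1}$. It also narrows the scope: the BV lemmas require non-negative, compactly supported data, whereas the paper's Remark after Lemma~\ref{lmacontractcompare} states that Lemma~\ref{lmaPhiineq} is meant to hold for arbitrary bounded entropy solutions. The paper itself treats the trace passage rather informally (``taking $h\to0$ yields\ldots, taking $\rho\to0$, $\tau\to T$, $\eps\to0$ gives~\eqref{curvel1}''), so your instinct to worry about it is good; but if you want a self-contained justification you should appeal to general trace theory for entropy solutions (or to the Kruzkov framework directly) rather than to the downstream regularity lemmas.
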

\begin{proof}
Here, we generalize the work in~\cite[Section 3]{Kruz}, in which a Lipschitz condition is assumed for the flux $F$.  However, for our model~\eqref{eqnSKomp}, we have no such condition.  Thus, we must retain some terms involving the flux, but will use properties of our particular flux to control these terms for non-negative entropy solutions when we go to prove the $L^1$ contraction and the comparison principle.

For this proof, we use the result of Lemma~\ref{lmacontractcompare} with an appropriate test function.  The test function we choose approximates the characteristic function of the time-space domain of integration $(0,T)\times(0,s(t))$ to mimic the use of the divergence theorem in $t$ and $x$ (see~\cite[Chapter 6]{Bressan}).
    To this end, we define $\rho$ and $\tau$ such that $0<\rho<\tau<T$.
    Let $\eps>0$ be small, and define the test function $\phi$ by
   \begin{equation}\label{chartest}
     \phi(t,x)=[\alpha_h(t-\rho)-\alpha_h(t-\tau)][\alpha_h(x-\eps)-\alpha_h(x-s(t)+\eps)]
   \end{equation}
  where
  \[
    \alpha_h(x) \defeq \int^x_{-\infty}\eta_h,
  \]
  and $s(t)$ is the curve denoting in the right side of the domain in the $xt$-plane.
  Thus,
  \begin{multline*}
    \partial_t \phi(t,x)=[\eta_h(t-\rho)-\eta_h(t-\tau)][\alpha_h(x-\eps)-\alpha_h(x-s(t)+\eps)]\\
    +\dot{s}(t)[\alpha_h(t-\rho)-\alpha_h(t-\tau)]\eta_h(x-s(t)+\eps)
  \end{multline*}
  and
  \begin{equation*}
    \partial_x \phi(t,x)=[\alpha_h(t-\rho)-\alpha_h(t-\tau)][\eta_h(x-\eps)-\eta_h(x-s(t)+\eps)].
  \end{equation*} 
  Note that as $h\to 0$, we have
\begin{subequations}
\begin{equation*}
  \partial_t \phi(t,x)\to\mathbbm{1}_{(\eps,s(t)-\eps)}(x)[\delta(t-\rho)-\delta(t-\tau)]+\dot{s}(t)\delta(x-s(t)+\eps)\mathbbm{1}_{(\rho,\tau)}(t)
\end{equation*}
\begin{equation*}
  \partial_x \phi(t,x)\to\mathbbm{1}_{(\rho,\tau)}(t)[\delta(x-\eps)-\delta(x-s(t)+\eps)]
\end{equation*}
\end{subequations}
where $\delta(\cdot)$ is the Dirac delta distribution and $\mathbbm{1}_{A}(\cdot)$ is the indicator function on the set $A$.
  
   Substituting the test function from~\eqref{chartest} into~\eqref{eqnphidiff} and taking $h\to 0$ yields
 \begin{equation}
 \begin{split}
\int_\eps^{s(\rho)-\eps}&\Psi(n(\rho,x)-n'(\rho,x))\dx-\int_\eps^{s(\tau)-\eps}\Psi(n(\tau,x)-n'(\tau,x))\dx\\
 +\int_\rho^\tau&\dot{s}(t)\Psi(n(t,s(t)-\eps)-n'(t,s(t)-\eps))\dt\\
 +\int^\tau_\rho&\Psi'(n(t,\eps)-n'(t,\eps))[F(\eps,n(t,\eps))-F(\eps,n'(t,\eps))]\dt\\
 -\int^\tau_\rho&\Psi'(n(t,s(t)-\eps)-n'(t,s(t)-\eps)0\cdot\\
&[F(s(t)-\eps,n(t,s(t)-\eps))-F(s(t)-\eps,n'(t,s(t)-\eps))]\dt\geq 0.
\end{split}
 \end{equation}
    Taking $\rho\to 0$, $\tau\to T$, and $\eps\to 0$ gives~\eqref{curvel1}.  Taking $s(t)\equiv R$ in~\eqref{curvel1} gives~\eqref{comparison2}, completing the proof.
\end{proof}

\subsection{Proofs of Proposition~\ref{ppnLoss} and Lemmas~\ref{lmaL1contract}, \ref{lmaSupport} and~\ref{lmaCompare}}\label{sxnProofsOfLemmas}



The $L^1$-contraction (Lemma~\ref{lmaL1contract}) follows immediately from Lemma~\ref{lmaPhiineq} and we address this first.

\begin{proof}[Proof of Lemma~\ref{lmaL1contract}]
  Choosing $a=1$ and $b=0$ in Lemma~\ref{lmaPhiineq}, we see $\Psi(s) = \abs{s}$, and Lemma~\ref{lmaL1contract} immediately follows from~\eqref{comparison2} and the fact that $n$ and $n'$ are non-negative.
\end{proof}

We now turn to showing Lemma~\ref{lmaSupport}, that if a non-negative entropy solution has compact support initially, then it will have compact support for all time.
\begin{proof}[Proof of Lemma~\ref{lmaSupport}]
We use an analog of~\eqref{curvel1} where we take $s(t)$ to be the left boundary of the spatial domain, and take $n'(t,x)\equiv 0$, which is clearly a non-negative entropy solution to~\eqref{eqnSKomp}.  Thus, we get
\begin{multline}\label{compactsupineq}
\int_{s(T)}^\infty\vert n(T,x)\vert\dx
-\int^T_0\vert n(t,s(t))\vert\left(2s(t)-s^2(t)-n(t,s(t))-\dot{s}(t)\right)\dt\\
\leq\int_{s(0)}^\infty\vert n(0,x)\vert\dx
\end{multline}
where we have used~\eqref{eqnentbc} to eliminate the integral with the right-side flux terms.  Setting $s(t)\equiv R$, where $R\geq 2$ is an upper bound for the support of $n_0$, in~\eqref{compactsupineq} and using the fact that $n$ is a non-negative entropy solution complete the proof.

In fact, we can strengthen the result in Lemma~\ref{lmaSupport}.  It is clear that the equations for the characteristics of~\eqref{eqnSKomp} are
\begin{gather*}
  \dot{x}=2x-x^2-2n\\
  \dot{n}=2xn-2n.
\end{gather*}
We can see that on a characteristic, if $n$ starts with a value of $0$, it will remain $0$ on the characteristic.  This observation allows us to strengthen our result from Lemma~\ref{lmaSupport}.   For characteristics starting at values $x_0>R$ outside the support of $n_0$, we will have
\begin{equation}\label{eqnlargechar}
\dot{x}=2x-x^2
\end{equation}
with $n(t,x(t))=0$ along the characteristics given by~\eqref{eqnlargechar}.
We also note that the equation for the characteristic is a logistic equation, so if $R>2$, then as $t\to\infty$, $x$ decreases to $2$ along the characteristic starting at $x_0$.  Define $s$ such that $\dot{s}(t)=2s(t)-s^2(t)$ and $s(0)=R$, where $[0,R]$ is the support of $n_0$. Substituting this into~\eqref{compactsupineq}  we obtain 
\begin{equation}\label{eqncmpsupp}
\int_{s(T)}^\infty\vert n(T,x)\vert\dx+\int^T_0 n^2(t,s(t))\dt\leq\int_{R}^\infty\vert n(0,x)\vert\dx = 0,
\end{equation}
finishing the proof.
\end{proof} 

\begin{remark}
  Since $s(T) \to 2$ as $T\to\infty$, the above proof shows that the support of the $n$ shrinks to $[0, 2]$ as $T \to \infty$, as remarked earlier.
\end{remark}

We now prove the comparison principle.

\begin{proof}[Proof of Lemma~\ref{lmaCompare}]
  Choosing $a=b=\frac{1}{2}$, the function $\Psi$ defined in Lemma~\ref{lmacontractcompare} becomes the positive part (i.e.\ $\Psi(s)=s_+$).
  Using~\eqref{comparison2}, we take $R$ to be an upper bound of the supports of $n_0$ and $n'_0$, obtaining
  \begin{equation}
    \begin{split}
      &\int^R_0\Psi(n(T,x)-n'(T,x))~dx\\
      &\leq \int^R_0\Psi(n(0,x)-n'(0,x))~dx\\
      &- \int^T_0\Psi'(n(t,R)-n'(t,R))\left[F(R,n(t,R))-F(R,n'(t,R))\right]~dt\\
      &+ \int^T_0\Psi'(n(t,0)-n'(t,0))\left[F(0,n(t,0))-F(0,n'(t,0))\right]~dt,
    \end{split}
  \end{equation}
   Using Lemma~\ref{lmaSupport}, and noting that $n$ and $n'$ are non-negative, and using the definition of $F$, we obtain that 
   \begin{equation}
   \int^R_0(n(T,x)-n'(T,x))_+~dx\leq\int^R_0(n(0,x)-n'(0,x))_+~dx
   \end{equation}
  which immediately yields the result.
\end{proof}

Finally, we conclude this subsection by proving Proposition~\ref{ppnLoss}, using techniques used in the proof of Lemma \ref{lmaL1contract}.
\begin{proof}[Proof of Proposition~\ref{ppnLoss}]
  In the weak formulation~\eqref{eqnweakform} we use the test function from~\eqref{chartest} with $s(t)\equiv R$ where $R$ is an upper bound for the support of $n_0$.
  Following the same argument as that used in proving~\eqref{curvel1}, we substitute $\phi$ into~\eqref{eqnweakform} and obtain~\eqref{eqnloss}, using Lemma~\ref{lmaSupport} to show that the term $\int^T_0F(R,n(t,R))\ dt=0$.
\end{proof}

\subsection{Existence}  
We devote this section to proving the existence of entropy solutions.
Kruzkov~\cite[Sections 4 and 5]{Kruz} uses a vanishing viscosity argument to show that entropy solutions for the Cauchy problem on all of $\mathbb{R}^n$ exist, provided the flux $F = F(p, q)$,
  $\partial_qF$, $\partial_q\partial_pF$, and $\partial^2_pF$ are all continuous, $\partial_pF(p,0)$ is bounded, $\partial_qF$ is bounded on horizontal strips (i.e. domains where $q$ is bounded), and $-\partial_p\partial_qF$ is bounded above on horizontal strips.
  If we naively extend our problem on $\R^+$ to the Cauchy problem on $\mathbb{R}$, it is clear that we meet all of these requirements except the boundedness of $\partial_qF$ and $-\partial_p\partial_qF$.

\begin{proposition}\label{ppnExistence}
  Let $n_0\in L^1(\mathbb{R}^+)$ be non-negative with compact support on some subset of $[0,R]$ for some $R>2$.
  Then there exists a non-negative entropy solution to~\eqref{eqnSKomp}-\eqref{eqnSKompBC} in the sense of Definition~\ref{dfnadmissible}.
\end{proposition}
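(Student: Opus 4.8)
The strategy is to put ourselves in a position to invoke Kruzkov's existence theory~\cite{Kruz} by cutting off the flux for large $x$, and then to use the finite speed of propagation already behind Lemma~\ref{lmaSupport} to show that the solution so produced is unaffected by the cutoff and hence solves the original problem. Writing the flux as $F(x,n) = (2x-x^2)n - n^2$, the only hypotheses of~\cite{Kruz} that fail are the boundedness of $\partial_n F = 2x - x^2 - 2n$ on horizontal strips and the boundedness from above of $-\partial_x\partial_n F = 2x - 2$; both fail only because the coefficient $2x-x^2$ and its slope grow without bound as $\abs{x}\to\infty$. Since $n_0$ is supported in $[0,R]$ and, as in Lemma~\ref{lmaSupport}, the support should only move to the left of $R$, this large-$x$ growth is never seen by the solution.

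Accordingly, I would fix $R' > R$ and pick $c \in C^\infty(\R)$ that is bounded together with all of its derivatives and satisfies $c(x) = 2x - x^2$ for $x \in [0,R']$. The modified flux $\tilde F(x,n) \defeq c(x)n - n^2$ agrees with $F$ on $\set{x \leq R'}$, and now $\partial_n\tilde F = c(x) - 2n$ is bounded on horizontal strips, $\partial_x\tilde F(x,0) = 0$, $-\partial_x\partial_n\tilde F = -c'(x)$ is bounded, and $\partial_n\tilde F$, $\partial_x\partial_n\tilde F = c'(x)$ and $\partial_x^2\tilde F = c''(x)n$ are continuous; so all the hypotheses of~\cite{Kruz} now hold. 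Kruzkov's vanishing-viscosity construction then yields, for bounded compactly supported data (extending $n_0$ by $0$), a bounded entropy solution $\tilde n$ of $\partial_t\tilde n + \partial_x\tilde F = 0$ on $[0,\infty)\times\R$; for general $n_0\in L^1$ I would approximate by the truncations $n_0\varmin j$ and pass to the limit using the $L^1$ contraction, which for the Lipschitz flux $\tilde F$ follows exactly as in Lemma~\ref{lmacontractcompare}. Non-negativity of $\tilde n$ is inherited from the viscous approximants: since $\tilde F(x,0)=0$, the constant $0$ solves the viscous equation, and the parabolic comparison principle forces $n^\eps \geq 0$.

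The heart of the matter is to confine the support of $\tilde n$ to $[0,R]$, so that the modification becomes irrelevant. The contraction and comparison estimates of Lemmas~\ref{lmacontractcompare} and~\ref{lmaPhiineq} were derived from the weak and entropy formulations alone, so they hold verbatim with $\tilde F$ in place of $F$; feeding them into the argument of Lemma~\ref{lmaSupport} — comparing $\tilde n$ against the zero solution along the curve $s(t)$ with $\dot s = 2s - s^2$, $s(0) = R$, as in~\eqref{eqncmpsupp} — shows that $\tilde n(t,\cdot)$ vanishes for $x > s(t)$, and $s(t) \leq R$ for all $t$ because $R > 2$. (Photons may be carried past the origin into $x < 0$ through the outflow there, but this mass is simply discarded.) In particular $\tilde n \equiv 0$ on $\set{x \geq R'}$, so on the physical region $(0,\infty)$ the solution lives entirely where $\tilde F = F$.

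It then remains to verify that $n \defeq \tilde n|_{(0,\infty)}$ is an entropy solution of~\eqref{eqnSKomp}--\eqref{eqnSKompBC} in the sense of Definition~\ref{dfnadmissible}. Test functions are supported in $(0,T)\times(0,\infty)$; wherever $\tilde n \neq 0$ one has $x \leq R$ and $\tilde F = F$, $\partial_x\tilde F = F_x$, while on $\set{x > R'}$ one has $\tilde n \equiv 0$ and there the difference between the $F$- and $\tilde F$-integrands in~\eqref{eqnweakentropy} is an exact $x$-derivative of a function vanishing for $x \leq R'$, which integrates to zero; hence~\eqref{eqnweakform} and~\eqref{eqnweakentropy} for $\tilde F$ become precisely those for $F$. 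The far-field condition~\eqref{eqnentbc} is immediate since $F(R'',\tilde n(t,R''))=0$ for $R'' > R$, and the membership $n \in L^1([0,T];L^2([0,\infty)))$ required by Definition~\ref{dfnadmissible} follows from the compact support together with the a priori $L^\infty$ control for positive times of Lemmas~\ref{lmaupperbound} and~\ref{lmabvbound} (degenerating only like $(1-e^{-t})^{-1}$ as $t\to0$, which keeps $\|n(t,\cdot)\|_{L^2}$ integrable near $t=0$). I expect the main obstacle to be making this cutoff-and-restrict scheme rigorous — specifically, establishing the rightward finite-speed-of-propagation that confines the support of $\tilde n$, the step that guarantees the artificial modification at large $x$ is never felt and that the outflow at $x = 0$ is correctly captured without imposing any boundary condition there.
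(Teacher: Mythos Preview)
Your approach is essentially the same as the paper's: both modify the linear coefficient $2x-x^2$ outside $[0,R]$ to a bounded smooth function so that Kruzkov's hypotheses hold, run the vanishing-viscosity argument on the whole line to obtain a non-negative entropy solution $\tilde n$, invoke the support-confinement argument behind Lemma~\ref{lmaSupport} to see that $\tilde n$ never leaves the region where the modified and original fluxes agree, and then restrict to the half-line. The paper's modification is the explicit choice $g(x)=2x-x^2$ on $[0,R]$, $g=2R-R^2-1$ for $x>2R$, and $g=-1$ for $x<-1$, but this plays the same role as your $c(x)$; your additional remarks on approximating unbounded $L^1$ data by truncation and on verifying the $L^1_tL^2_x$ membership via the a priori $L^\infty$ bound for $t>0$ fill in points the paper leaves implicit.
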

\begin{proof}
We prove the existence of entropy solutions by using a vanishing viscosity argument.  We consider the problem
\begin{equation}\label{artvis}
\partial_t n_\eps+\partial_x\bar{F}(x,n_\eps)=\eps\partial_x^2n_\eps
\end{equation}
on the entire real line and will consider the vanishing viscosity limit $\eps\downarrow 0$.  We consider the Cauchy problem with $L^1(\mathbb{R})$ initial data \[n^0(x)=\begin{cases} n_0(x), & x>0\\ 0, & x\leq 0\end{cases}.\]  The key step is extending the flux $F$ on $(0,\infty)\times\mathbb{R}$ to some flux $\bar{F}$ on $\mathbb{R}\times\mathbb{R}$ so that $\bar{F}$ meets the boundedness and regularity requirements listed above.  In light of Lemma~\ref{lmaSupport}, we know that for any time $t>0$, any non-negative entropy solution will be zero at $x>R$.  Thus, we fix the value of the flux at $x=R$, and extend this rightward toward infinity.  We will also extend the flux leftward to and obtain
\begin{equation}\label{eqnExtendedFlux}
      \bar{F}(x, n) = g(x) n - n^2
    \end{equation}
    where $g$ is a smooth function such that
    \begin{equation*}
      g(x) =
	\begin{dcases}
	  2x - x^2  & x \in [0, R]\\
	  2R - R^2  - 1 & x > 2R\\
	  -1  & x < -1.
	\end{dcases}
   \end{equation*}
extending smoothly in $x$ using standard techniques.  A simple calculation shows that $\bar{F}_n$ is Lipschitz in $n$ with a Lipschitz constant of $2$.  Using standard parabolic existence results and a standard parabolic comparison principle
argument, as a simple calculation shows the linear part of the parabolic equation~\eqref{artvis} is bounded below, it is clear that for each $\eps>0$, there is some non-negative $\tilde{n}_\eps$ solving~\eqref{artvis}.  Using standard techniques for taking the vanishing viscosity limit (see, for example, \cite[Section 4]{Kruz} and~\cite[Chapter 6]{Dafermos}) we take $\eps\to 0$, we obtain the existence of a non-negative entropy solution $\tilde{n}$ to the Cauchy problem
\begin{equation}
\begin{cases}
\partial_t\tilde{n}+ \partial_x \bar{F}(x,\tilde{n})=0 & (t,x)\in(0,\infty)\times\mathbb{R}\\
n(0,x)=n^0(x)& x\in\mathbb{R}.
\end{cases}
\end{equation}  
It is left to restrict the problem to the half-line.
Adapting the proof of Lemma~\ref{lmaSupport} is one can quickly show $\tilde{n}(t,x) = 0$ for any $x>R$, $t>0$.
  Thus, we can restrict the class of test functions we consider for the weak formulation to be those compactly supported on $(0,\infty)$, and therefore obtain the entropy inequality on the half-line.  Setting $n$ to be the restriction of $\tilde{n}$ on the half-line completes the proof.
\end{proof}

  \section{Regularity of Entropy Solutions and Compactness}\label{sxnCompactness}
  In this section we prove
  Lemmas~\ref{lmaupperbound}--\ref{lmabvbound},
  concerning the BV estimates for entropy solutions to~\eqref{eqnSKomp}.
  In each of the following subsections, we prove each lemma in turn.
  \subsection{A sharp upper bound as \texorpdfstring{$t \to \infty$}{t to infinity}}\label{sxnupperboundproof}

  In this section, we prove the boundedness of entropy solutions by the maximal supersolution from~\eqref{eqnNAlpha}.
  \begin{proof}[Proof of Lemma~\ref{lmaupperbound}]
    The main idea behind the proof is to find a special function $\bar n$ such that
    \begin{equation}\label{eqnSKompSupSol}
      \partial_t \bar n + \partial_x \bar F \geq 0.
    \end{equation}
    This heuristically corresponds to the notion of a super-solution.
    In the context of parabolic equations the comparison principle guarantees that a solution that starts below a super-solution will always stay below.
    For hyperbolic conservation laws, however, there isn't an analogous result as the notion of entropy super-solutions has not been developed rigorously.
    A comparison principle is known (see~\cite[Theorem 3]{Kruz}), but only compares two entropy solutions.
    We circumvent the use of a comparison principle for entropy super-solutions by using viscous limits.

    Before delving into the technical details, we begin with a formal computation of a special ``super-solution''.
    Choose $\bar n$ to be a function that satisfies
    \begin{equation}\label{eqnBarNdef}
        F( x, \bar n ) =(2x-x^2)\bar n-\bar n^2 = - K(t) G(x),
    \end{equation}
    where $K$ and $G$ are chosen in order to arrange~\eqref{eqnSKompSupSol}.

    Solving~\eqref{eqnBarNdef} explicitly (with the constraint $\bar n \geq 0$) gives
    \begin{equation}\label{eqnBarNExplicit}
      \bar n( x, t )
	= \frac{1}{2} \paren[\Big]{
	    g +  \sqrt{ g^2 + 4KG }
	  },
    \end{equation}
    where
    \begin{equation*}
      g(x) = 2x - x^2.
    \end{equation*}
    We compute
    \begin{equation*}
      \partial_t \bar n + \partial_x \bar F
	\geq \frac{G \partial_t K}{\sqrt{g^2 + 4KG}} - K \partial_x G
	\geq \frac{\sqrt{G} \partial_t K}{2\sqrt{K}} - K \partial_x G,
    \end{equation*}
    provided $K$ is chosen such that $\partial_t K \leq 0$.
    Since $K$ only depends on $t$ and $G$ only depends on $x$, we separate variables to ensure the right hand side of the above vanishes.
    This gives
    \begin{equation*}
      K(t) = \frac{1}{\paren{\beta t + c_1}^2}
      \quad\text{and}\quad
      G(x) = \frac{\beta^2 \paren{R + c_2 - x}^2}{4},
    \end{equation*}
    where $\beta$, $c_1$ and $c_2$ are non-negative constants.
    Choosing $c_1 = 0$ small and $c_2 > 0$ provides a ``super-solution'' with initial data $\bar n_0 = \infty$.
    If a notion of entropy super-solutions and corresponding comparison principle was available, we would have
    \begin{equation*}
      n(t, x) \leq \bar n(t, x) \xrightarrow{t \to \infty} g(x)_+ = \hat n_0(x),
    \end{equation*}
    proving~\eqref{eqnNooBound} as desired.

    Proceeding to the actual proof, we avoid the above difficulty by using viscous limits.
    Recall $n$ is the pointwise limit of $n_\epsilon$, where $n_\epsilon$ solves
    \begin{equation}\label{eqnNep}
      \partial_t n_\epsilon + \partial_x \bar F (x, n_\epsilon) = \epsilon \partial_x^2 n_\epsilon,
    \end{equation}
    on the whole line $x \in \R$ and vanishes at infinity.
    Here $\bar F$ is the extended flux
    defined by
    \begin{equation*}
      \bar F(x, n) = g(x) n - n^2
    \end{equation*}
    where $g$ is a smooth function such that
    \begin{equation}\label{gdef}
      g(x) =
	\begin{dcases}
	  2x - x^2  & x \in [0, R]\\
	  2R - R^2  - 1 & x > 2R\\
	  -1  & x < -1.
	\end{dcases}
    \end{equation}

    We first claim that for any fixed $\delta > 0$, the functions $n_\epsilon$ converge uniformly 
(as $\epsilon \to 0$)
    to $0$ on the set 
    \begin{equation*}
      \set{ t \geq 0, \; x \not\in [-1-\delta, R+\delta] }.
    \end{equation*}
    To see this, we note that the parabolic comparison principle can be used on equation~\eqref{eqnNep} (see the proof of Proposition~\ref{ppnExistence}).
    For simplicity, assume that the function $g$ is chosen so that $g' \leq 0$ on $[R, \infty)$.
    In this case, a function $m_\epsilon$ that only depends on $x$ and satisfies
    \begin{equation*}
      \partial_x m_\epsilon \leq 0
      \quad\text{and}\quad
      -\norm{g}_\infty \partial_x m_\epsilon - 2 m_\epsilon \partial_x m_\epsilon - \epsilon \partial_x^2 m_\epsilon = 0
    \end{equation*}
    is clearly a super-solution to~\eqref{eqnNep} on the interval $[R, \infty)$.
    Solving this equation with boundary conditions $m(R) = \infty$ and decay at infinity yields
    \begin{equation*}
      m_\epsilon(x) = 
	\frac{
	  \norm{g}_\infty
	  \exp\paren[\Big]{ -\norm{g}_\infty \paren[\Big]{\dfrac{x - R}{\epsilon}} }
	}{
	  1 - \exp\paren[\Big]{ -\norm{g}_\infty \paren[\Big]{\dfrac{x - R}{\epsilon}} }
	}.
    \end{equation*}

    Since $n_\epsilon(0, x) = 0$ for $x \geq R$ and $n_\epsilon(t, R) < m_\epsilon(R)$ the comparison principle guarantees
    \begin{equation}\label{eqnNepLeqMep}
      n_\epsilon(x, t) \leq m_\epsilon(x)
      \quad\text{for $x \geq R$ and $t \geq 0$.}
    \end{equation}
    This shows that as $\epsilon\to0$, $n_\epsilon \to 0$ uniformly on $\set{ t \geq 0, \; x \geq R+\delta }$.
    A similar argument can be applied to obtain uniform convergence on $\set{ t \geq 0, \; x \leq -\delta }$, proving the claim.

    Now, suppose momentarily that $\norm{n_0}_\infty < \infty$.
    For $M > 0$, define the functions $K_M$ and $G$ by
    \begin{equation*}
      K_M(t) = \frac{1}{\paren[\big]{3t + \frac{1}{M}}^2}
      \quad\text{and}\quad
      G(x) = \paren{3R - x}^2,
    \end{equation*}
    and define $\bar n_M$ by~\eqref{eqnBarNExplicit}.
    We compute
    \begin{equation*}
      \partial_t \bar n_M + \partial_x \bar F - \epsilon \partial_x^2 \bar n_M
	\geq \frac{\sqrt{G} \partial_t K_M}{2\sqrt{K_M}} - K_M \partial_x G
	  - \epsilon \partial_x^2 \bar n_M
	= \sqrt{G} K_M - \epsilon \partial_x^2 \bar n_M.
    \end{equation*}
    For any fixed $T > 0$ observe
    \begin{equation*}
      \inf_{\substack{t \in [0, T]\\x \in [-1, 2R]} } \sqrt{G} K_M > 0
      \quad\text{and}\quad
      \sup_{\substack{t \in [0, T]\\x \in [-1, 2R]} } \abs{\partial_x^2 \bar n_M} < \infty.
    \end{equation*}
    Thus for $\epsilon$ small enough we have
    \begin{equation*}
      \partial_t \bar n_M + \partial_x \bar F - \epsilon \partial_x^2 \bar n_M \geq 0
    \end{equation*}
    on $t \in [0, T]$ and $x \in [-1, 2R]$.
    Since we have (temporarily) assumed $\norm{n_0}_\infty < \infty$, we can make $M$ large enough to ensure
    \begin{equation*}
      \Chi*{[0, R]}(x) n_0(x) = n_\epsilon(0, x) \leq \inf_{x \in [-2, 2R]} \bar n_M(x).
    \end{equation*}
    Finally, for the boundary conditions let $S =\set{t \in [0, T], x \in \set{-1, 2R} }$.
    Since $n_\epsilon \to 0$ uniformly on $S$, and $\inf_S \bar n_M > 0$ for $\epsilon$ small enough we must have $\bar n_M \geq n_\epsilon$ on~$S$.

    Thus, the parabolic comparison principle guarantees
    \begin{equation*}
      n_\epsilon \leq \bar n_M
      \quad\text{for } t\in [0, T],~x \in [-1, 2R].
    \end{equation*}
    Sending $\epsilon \to 0$ and $M, T \to \infty$ now yields
    \begin{equation}\label{eqnNbound}
      n(x, t) \leq \bar n(x, t)
      \quad\text{for } t \geq 0, x \in [0, R],
    \end{equation}
    where $\bar n \defeq \lim \bar n_M$ as $M \to \infty$.

    The above was proved with the assumption that $\norm{n_0}_\infty< \infty$.
    However, since the right hand side is independent of $\norm{n_0}_\infty$, we can immediately dispense with this assumption.
    Indeed, choose a sequence of non-negative $L^\infty$ that converge to $n_0$ in $L^1$.
    Then the corresponding solutions each satisfy the bound~\eqref{eqnNbound}, and converge to $n$ in $L^1$.
    Hence $n$ itself must satisfy~\eqref{eqnNbound}.
    Finally, sending $t \to \infty$ in~\eqref{eqnNbound} proves~\eqref{eqnNooBound}, concluding the proof.
  \end{proof}

  \subsection{A one-sided Lipschitz bound}\label{sxnLipBound}
  We now turn to proving the one-sided Lipschitz bound on entropy solutions.
  \begin{proof}[Proof of Lemma~\ref{lmalipbound}]
    As before, we use the fact that $n$ is the pointwise limit of the viscous solutions~$n_\epsilon$, where~$n_\epsilon$ solves~\eqref{eqnNep} on the whole line $x \in \R$ and vanishes at infinity.
    Let $m_\epsilon = \partial_x n_\epsilon$, and we compute
    \begin{equation}\label{eqnMep}
      \partial_t m_\epsilon
	- 2m_\epsilon^2
	+ 2g' m_\epsilon
	+ g'' n_\epsilon
	+ (g - 2n_\epsilon) \partial_x m_\epsilon
	- \epsilon \partial_x^2 m_\epsilon
	= 0.
    \end{equation}
    The first step in the proof is to bound $m_\epsilon$ from below.
    We do this by constructing a sub-solution $\underline{m}_\epsilon$ that only depends on time.

    To find $\underline{m}_\epsilon$, observe that if $\underline{m}_\epsilon \leq 0$ then
    \begin{multline}\label{eqnMLowerBar1}
      \partial_t \underline{m}_\epsilon
	- 2\underline{m}_\epsilon^2
	+ 2g' \underline{m}_\epsilon
	+ g'' n_\epsilon
	+ (g - 2n_\epsilon) \partial_x \underline{m}_\epsilon
	- \epsilon \partial_x^2 \underline{m}_\epsilon
      \\
      \leq
	\partial_t \underline{m}_\epsilon
	  - 2\underline{m}_\epsilon^2
	  - C' \underline{m}_\epsilon
	  + \sup (g'' n_\epsilon).
    \end{multline}
    where
    \begin{equation*}
      C' \defeq 2\norm{g'}_\infty
      \quad\text{and}\quad
      C_\epsilon \defeq \sup ( g'' n_\epsilon ) .
    \end{equation*}
    Note $n_\epsilon \geq 0$ on $\R$ and for some small $\delta > 0$ we must have $g'' < 0$ on $[-\delta, R + \delta]$.
    Further, since $n_\epsilon \to 0$ uniformly on $\set{ t \geq 0, x \not\in [-\delta, R + \delta ] }$, we must have $C_\epsilon \to 0$ as $\epsilon \to 0$.

    Now equating the right hand side of~\eqref{eqnMLowerBar1} to $0$ and solving for $\underline{m}_\epsilon$ yields
    \begin{equation*}
      \underline{m}_\epsilon(t)
	= \frac{-C'}{4}
	  - \frac{\sqrt{8 C_\epsilon + (C')^2}
	      \paren[\Big]{
		1 + \exp\paren[\big]{ - t \sqrt{ 8C_\epsilon + (C')^2 } }
	      }
	    }{
	      4
	      \paren[\Big]{
		1 - \exp\paren[\big]{ - t \sqrt{ 8C_\epsilon + (C')^2 } }
	      }
	    }.
    \end{equation*}
    Since $\underline{m}_\epsilon(0) = -\infty$, and (by construction) $\underline{m}_\epsilon$ is a sub-solution to~\eqref{eqnMep}, 
provided that $\epsilon$ is small enough to guarantee $8 C_\epsilon \leq 1$,
    we must have
    \begin{equation*}
      \partial_x n_\epsilon
	= m_\epsilon
	\geq \underline{m}_\epsilon
	\geq \underline{m}\,,
    \end{equation*}
    where 
    \begin{equation}\label{mdef}
\underline{m}=      \underline{m}(t,R)
	\defeq
	\frac{-C'}{4}
	  - \frac{\sqrt{1 + (C')^2} }{
	      2
	      \paren[\Big]{
		1 - \exp\paren[\big]{ - t \sqrt{ 1 + (C')^2 } }
	      }
	    }.
    \end{equation}
    Now for $0 \leq x < y \leq R$ and $t > 0$ we have
    \begin{equation*}
      n(t, y) - n(t, x)
	= \lim_{\epsilon \to 0} n_\epsilon(t, y) - n_\epsilon(t, x)
	\geq \underline{m}(t,R) (y - x)
    \end{equation*}
    concluding the proof.
  \end{proof}

  \subsection{Compactness in \texorpdfstring{$L^1$}{L1}}\label{sxnL1Compactness}
We conclude this paper with the proof of Lemma~\ref{lmabvbound}.
\begin{proof}[Proof of Lemma~\ref{lmabvbound}]
We fix $t_0>0$ and let $n$ be a non-negative entropy solution with initial data supported on $[0,R]$, with $R\geq 2$.
Non-negativity and Lemma~\ref{lmaPhiineq} with $n' = 0$ (or Proposition~\ref{ppnLoss}) imply $\|n(t,\cdot)\|_{L^1(0,\infty)}$ is a non-increasing function of time, and thus is bounded by  $\norm{n_0}_{L^1}$.
Thus, we need to only control the total variation of $n(t,\cdot)$ to control the BV norm.

  For this, observe that Lemma~\ref{lmaSupport} guarantees $n$ is supported on $[0,R]$ for all $t>0$, so it suffices to restrict our attention to $[0,R]$.
By the one-sided Lipschitz bound of Lemma~\ref{lmalipbound}, we can write
$n(t,\cdot)$ as the difference of two increasing functions as
\[
n(t,x)= (n(t,x)-\underline{m}(t,R)x ) + \underline{m}(t,R)x\;.
\]
Therefore, because $n(t,0)\geq 0$ and $n(t,R)=0$, we deduce that for all $t\geq t_0$,
   \begin{equation*}
    \operatorname{TV}[n(\cdot , t)]
      \leq 2  \abs{\underline{m}(t_0,R)} R \;,
  \end{equation*}
%
  which immediately implies~\eqref{eqnNormNBV}.
  Finally, the result of Helly (see~\cite[Theorem 2.3]{Bressan}) shows relative compactness of $\set{n(t, \cdot)}_{t > 0}$ in $L^1$ completing the proof.
 \end{proof}

  \bibliographystyle{siamplain}
  \bibliography{mainbib,refs}
\end{document}